\newtheoremstyle{theorem}
  {10pt}          
  {10pt}  
  {\sl}  
  {\parindent}     
  {\bf}  
  {. }    
  { }    
  {}     
\newtheorem{thm}{Theorem}[section]
\newtheorem{cor}[thm]{Corollary}
\newtheorem{lem}[thm]{Lemma}
\newtheorem{prop}[thm]{Proposition}
\newtheorem{defn}{Definition}[section]
\newtheorem{exam}[defn]{Example}
\newtheorem{rem}{Remark}[section]
\begin{document}
\title{\large\bf
On conformable fractional calulus}
\author{\small \bf T. Abdeljawad $^{a,b}$   \\ {\footnotesize $^a$ Department of Mathematics and General Sciences,
 Prince Sultan University-Riyadh-KSA}\\
 {\footnotesize $^b$ Department of Mathematics, \c{C}ankaya
University, 06530, Ankara, Turkey}}
\date{}
\maketitle {\footnotesize {\noindent\bf Abstract.} Recently, the
authors  Khalil, R., Al Horani, M., Yousef. A. and Sababheh, M., in  " A new Deﬁnition Of Fractional
Derivative, J. Comput. Appl. Math. 264. pp. 65–70, 2014.
" introduced a new simple
well-behaved definition of the fractional derivative called
conformable fractional derivative. In this article we proceed on to
develop the definitions there and set the basic concepts in this new
simple interesting  fractional calculus. The fractional versions of
chain rule, exponential functions, Gronwall's inequality,
integration by parts, Taylor power series expansions, Laplace
transforms and linear differential systems are proposed and
discussed.

{\bf Keywords:} Left and right conformable fractional derivatives,
left and right  conformable fractional integrals, fractional Taylor
power series expansion, fractional Laplace, fractional exponential
function, fractional Gronwall's inequality, chain rule .

\section{Introduction }
The fractional calculus \cite{book1,book2,book3} attracted many
researches in the last and present centuries. The impact of this
fractional calculus in both pure and applied branches of science and
engineering started to increase substantially during the last two
decades apparently. Many researches started to deal with the
discrete versions of this fractional calculus benefitting from the
theory of time scales ( see \cite{Gray, Miller, Ferd2, THFer, Thsh}
and the references therein. The main idea behind setting this
fractional calculus is summarized into two approaches. The first
approach is the Riemann-Liouville which based on iterating the
integral operator $n$ times and then replaced it by one integral via
the famous Cauchy formula where then $n!$ is changed to the Gamma
function and hence the fractional integral of noninteger order is
defined. Then integrals were used to define Riemann and Caputo fractional
derivatives. The second approach is the Gr\"{u}nwald-Letnikov
approach which based on iterating the derivative $n$ times and then
fractionalizing by using the Gamma function in the binomial
coefficients. The obtained fractional derivatives in this calculus
seemed complicated and lost some of the basic properties that usual
derivatives have such as the product rule and chain rule. However,
the semigroup properties of these fractional operators behave well
in some cases. Recently, the author in \cite{New} define a new
well-behaved simple fractional derivative called "the conformable
fractional derivative" depending just on the basic limit definition
of the derivative. Namely. for a function $f:(0,\infty)\rightarrow
\mathbb{R} $ the (conformable) fractional derivative of order $0<
\alpha \leq 1$ of $f$ at $t>0$ was defined by

\begin{equation}\label{basic}
  T_\alpha f(t)= \lim_{\epsilon\rightarrow 0}\frac{f(t+ \epsilon t^{1-\alpha } )-f(t)}  {\epsilon
  },
\end{equation}
and the fractional derivative at $0$ is defined as $(T_\alpha
f)(0)=\lim_{t \rightarrow 0^+}(T_\alpha  f)(t)$.

They then define the fractional derivative of higher order (i.e. of
order $\alpha > 1$) as we will see below in next sections. They also
define the fractional integral of order $0< \alpha \leq 1$ only.
They then proved the product rule, the fractional mean value theorem
solved some (conformable) fractional differential equations where
the fractional exponential function $e^{\frac{t^\alpha}{\alpha}}$
played an important rule. While in case of well-known fractional
calculus Mittag-Leffler functions generalized exponential functions.
In this article we continue to settle the basic definitions and
concepts of this new theory motivated by the fact that there are
certain functions which do not have  Taylor power series
representation or their Laplace transform can not be calculated and
so forth but will be possible to do so by the help of the theory of
this conformable fractional calculus. The article is organized as
follows: In Section 2 the left and right (conformable) fractional
derivatives and fractional integrals of higher orders are defined,
the fractional chain rule and Gronwall inequality are obtained and
the action of fractional derivatives and integrals to each other are
discussed. The conformable and sequential conformable fractional derivatives of higher orders are discussed at the end points as well . In Section 3, two kinds of fractional integration by
parts formulas when $0<\alpha \leq$ are obtained where the usual
integration by parts formulas in usual cases are reobtained when
$\alpha \rightarrow 1$. In Section 4, the fractional power series
expansions for certain functions that do not have Taylor power
series representation in in usual calculus are obtained and the
fractional Taylor inequality is proved. Finally, in Section 5 the
fractional Laplace transform is defined and used to solve a
conformable fractional linear differential equation, where also the
fractional Laplace of certain basic functions are calculated.

\section{Basic definitions and tools}

\begin{defn} \label{de}
The (left) fractional derivative starting from $a$ of a function $f:[a,\infty)$ of order $0< \alpha\leq 1$ is defined by

\begin{equation}\label{deq}
    (T_\alpha ^a f)(t)= \lim_{\epsilon \rightarrow 0} \frac{f(t+ \epsilon(t-a)^{1-\alpha } )-f(t)}  {\epsilon }.
\end{equation}
When $a=0$ we write $T_\alpha$. If $(T_\alpha f)(t)$ exists on
$(a,b)$ then $(T_\alpha ^ a f)(a)=\lim_{t \rightarrow a^+}(T_\alpha
^ a f)(t)$ .

\indent

The (right) fractional derivative of order $0< \alpha\leq 1$ terminating at $b$ of $f$ is defined by

\begin{equation}\label{right}
    (~^{b}_{\alpha}T f)(t)=-\lim_{\epsilon \rightarrow 0} \frac{f(t+ \epsilon(b-t)^{1-\alpha } )-f(t)}  {\epsilon }.
\end{equation}
 If $(~^{b}T_\alpha f)(t)$ exists on
$(a,b)$ then $(~^{b}T_\alpha f)(b)=\lim_{t \rightarrow
b^-}(~^{b}T_\alpha f)(t)$ .
\end{defn}

Note that if $f$ is differentiable then $(T_\alpha ^a f)(t)=
(t-a)^{1-\alpha} f^\prime (t)$ and $(~^{b}_{\alpha}T
f)(t)=-(b-t)^{1-\alpha}f^\prime (t)$. It is clear that the
conformable fractional derivative of the constant function is zero.
Conversely, if $T_\alpha f(t)=0$  on an interval $(a,b)$ then by the
help of conformable fractional mean value theorem proved in
\cite{New} we can easily show that $f(x)=0$ for all $x \in (a,b)$.
Also, by the help of the fractional mean value theorem there we can
show that if the conformable fractional derivative of a function $f$
on an interval $(a,b)$ is positive (negative) then it the graph of
$f$ is increasing (decreasing) there.

\textbf{Notation} $(I_\alpha^a f)(t)=\int_{a}^tf(x) d\alpha(x,a)=\int_{a}^t (x-a)^{\alpha-1}f(x)dx$. When $a=0$ we write $d\alpha(x)$. Similarly, in the right case we have
$(~^{b}I_\alpha f)(t)=\int_{t}^bf(x) d\alpha(b,x)=\int_{a}^t (b-x)^{\alpha-1}f(x)dx$. The operators $I_\alpha^a $ and $~^{b}I_\alpha$ are called conformable left and right fractional integrals of order $0<\alpha \leq 1$.

In the higher order case we can generalize to the following:
\begin{defn}
Let $\alpha \in (n,n+1], $ and set $\beta= \alpha-n$. Then, the (left) fractional derivative starting from $a$ of a function $f:[a,\infty)$ of order $\alpha$, where $f^{(n)}(t)$ exists, is defined by

\begin{equation}\label{hde}
    (\textbf{T}_\alpha ^a f)(t)= (T_\beta ^a f^{(n)})(t).
\end{equation}
When $a=0$ we write $\textbf{T}_\alpha$.

The (right) fractional derivative of order $\alpha$ terminating at $b$ of $f$ is defined by

\begin{equation}\label{right}
(~^{b}_{\alpha}\textbf{T} f)(t)=(-1)^{n+1}(~^{b}_{\beta}T
f^{(n)})(t)
\end{equation}
\end{defn}
Note that if $\alpha=n+1$ then $\beta=1$ and the fractional derivative of $f$ becomes $f^{(n+1)}(t)$. Also when $n=0$ (or $\alpha \in (0,1)$) then $\beta=\alpha$ and the definition coincides with those in Definition \ref{de}.

\indent

\begin{lem} \label{n} \cite{New}
Assume that $f:[a,\infty)\rightarrow \mathbb{R}$ is continuous and
$0< \alpha\leq 1$. Then, for all $t>a$ we have $$T_\alpha^a I_\alpha
^a f(t)=f(t).$$
\end{lem}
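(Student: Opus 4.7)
The plan is to reduce the statement to the Fundamental Theorem of Calculus by exploiting the alternative formula for $T_\alpha^a$ when the argument is differentiable. Set $g(t) = (I_\alpha^a f)(t) = \int_a^t (x-a)^{\alpha-1} f(x)\,dx$. First I would check that $g$ is well-defined and differentiable on $(a,\infty)$: the integrand $(x-a)^{\alpha-1} f(x)$ is continuous on $(a,\infty)$ since $f$ is continuous, and its singularity at $x=a$ is integrable because $\alpha>0$, so the integral converges for each $t>a$.

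Next I would invoke the Fundamental Theorem of Calculus, using the continuity of the integrand on $(a,\infty)$, to conclude that $g$ is differentiable for every $t>a$ with
\begin{equation*}
g'(t) = (t-a)^{\alpha-1} f(t).
\end{equation*}
Then I would apply the remark made immediately after Definition~\ref{de} in the excerpt, namely that whenever the argument is differentiable one has $(T_\alpha^a g)(t) = (t-a)^{1-\alpha} g'(t)$. Substituting the expression for $g'(t)$ gives
\begin{equation*}
(T_\alpha^a I_\alpha^a f)(t) = (t-a)^{1-\alpha}\cdot (t-a)^{\alpha-1} f(t) = f(t),
\end{equation*}
which is the claim.

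There is no real obstacle here beyond confirming that the FTC applies despite the improper behavior of $(x-a)^{\alpha-1}$ at the lower endpoint; this is handled by the continuity of the integrand on the half-open interval $(a,\infty)$, which is all the classical FTC for differentiation of the upper limit needs. If one preferred a self-contained argument avoiding the ``differentiable then $(t-a)^{1-\alpha} f'$'' shortcut, one could instead work directly from the limit definition: expressing $g(t+\epsilon(t-a)^{1-\alpha}) - g(t)$ as $\int_t^{t+\epsilon(t-a)^{1-\alpha}} (x-a)^{\alpha-1} f(x)\,dx$, applying the integral mean value theorem, and using continuity of $f$ and of $(x-a)^{\alpha-1}$ at $x=t$ to pass to the limit; both routes deliver the same conclusion.
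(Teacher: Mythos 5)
Your proof is correct; the paper itself states this lemma without proof, citing \cite{New}, and your argument (differentiate $g(t)=\int_a^t (x-a)^{\alpha-1}f(x)\,dx$ via the Fundamental Theorem of Calculus and then use $(T_\alpha^a g)(t)=(t-a)^{1-\alpha}g'(t)$ for differentiable $g$) is exactly the standard argument used in that reference. Nothing is missing: the continuity of $f$ ensures the FTC applies at each $t>a$, and the two factors $(t-a)^{1-\alpha}$ and $(t-a)^{\alpha-1}$ cancel as you say.
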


In the right case we can similarly prove:

\begin{lem} \label{nr}
Assume that $f:(-\infty,b]\rightarrow \mathbb{R}$ is continuous and
$0< \alpha\leq 1$. Then, for all $t<b$ we have $$~^{b}T_\alpha
~^{b}I_\alpha  f(t)=f(t).$$
\end{lem}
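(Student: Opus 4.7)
The plan is to mirror the argument for the left-sided identity (Lemma \ref{n}) using the formula recorded just after Definition \ref{de}, namely that for any classically differentiable function $g$ one has $(~^{b}_{\alpha}T g)(t) = -(b-t)^{1-\alpha} g'(t)$. So the whole task reduces to computing the ordinary derivative in $t$ of the right fractional integral and then multiplying by $-(b-t)^{1-\alpha}$.

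Concretely, set
\[
F(t) := (~^{b}I_\alpha f)(t) = \int_t^b (b-x)^{\alpha-1} f(x)\, dx.
\]
The first step is to note that the integrand $x \mapsto (b-x)^{\alpha-1} f(x)$ is continuous on $(-\infty, b)$; the factor $(b-x)^{\alpha-1}$ is only singular at the upper endpoint $x=b$, and since $\alpha>0$ this singularity is integrable, so $F(t)$ is well-defined for every $t<b$. The second step is to apply the Fundamental Theorem of Calculus: for $t<b$ the integrand is continuous at $t$, so $F$ is differentiable at $t$ with
\[
F'(t) = -(b-t)^{\alpha-1} f(t).
\]

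The third step is to feed this into the differentiation identity for the right conformable derivative:
\[
(~^{b}T_\alpha F)(t) = -(b-t)^{1-\alpha} F'(t) = -(b-t)^{1-\alpha}\bigl(-(b-t)^{\alpha-1} f(t)\bigr) = f(t),
\]
which is exactly the desired identity.

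There is really no serious obstacle here; the only point that deserves a little care is the justification for using $(~^{b}_{\alpha}T g)(t) = -(b-t)^{1-\alpha} g'(t)$ with $g=F$. That formula was stated under the hypothesis that $g$ is differentiable, and differentiability of $F$ at any interior point $t<b$ is precisely what the Fundamental Theorem of Calculus delivers from the continuity of $f$. Everything else is direct algebra, and the symmetry with the left case proved in \cite{New} makes the structure transparent.
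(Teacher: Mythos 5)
Your proof is correct and is essentially the paper's intended argument: the paper gives no explicit proof of this lemma, saying only that it follows ``similarly'' to Lemma \ref{n}, whose proof in \cite{New} is exactly your computation --- differentiate the integral via the Fundamental Theorem of Calculus and combine with the formula $(~^{b}T_\alpha g)(t)=-(b-t)^{1-\alpha}g'(t)$ for differentiable $g$. Your extra remarks on the integrable singularity at $x=b$ and on why $F$ is differentiable at interior points are a minor but sound elaboration of the same route.
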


 Next we give the definition of left and
right fractional integrals of any order $\alpha > 0$.

\begin{defn}\label{int}
Let $\alpha \in (n,n+1]$ then the left fractional integral starting at $a$ if order $\alpha$ is defined by
\begin{equation}\label{intq}
    (I_\alpha^a f )(t)= \textbf{I}_{n+1} ^a((t-a)^{\beta-1}f)=\frac{1}{n!}\int_a^t (t-x)^{n}(x-a)^{\beta-1}f(x)dx
\end{equation}
\end{defn}

Notice that if $\alpha=n+1$ then $\beta=\alpha-n=n+1-n=1$ and hence
$(I_\alpha ^a f)(t)=(\textbf{I}_{n+1}^a f)(t)=\frac{1}{n!}\int_a^t
(t-x)^n f(x)dx$,
 which is by means of Cauchy formula the iterative integral of $f$,  $n+1$ times  over $(a,t]$.

 Recalling that the left Rieamann-Liouville fractional integral of
 or order $\alpha >0$ starting from $a$ is defined by

 \begin{equation}\label{rl}
    (\textbf{I}_\alpha ^a f)(t)=\frac{1}{\Gamma(\alpha)} \int_a^t
    (t-s)^{\alpha-1} f(s)ds,
 \end{equation}

we see that $(I_\alpha ^a f)(t)= (\textbf{I}_\alpha ^a f)(t)$ for
$\alpha= n+1,~~~~n=0,1,2,...$.

\begin{exam}
Recalling that \cite{book1} $\textbf{I}_\alpha ^a
(t-a)^{\mu-1})(x)=\frac{\Gamma(\mu)}{\Gamma(\mu+\alpha)}
(x-a)^{\alpha+\mu-1},~~~~\alpha, \mu >0$,

we can calculate the (conformable) fractional integral of
$(t-a)^\mu$ of order $\alpha \in (n,n+1]$. Indeed, if $\mu \in
\mathbb{R}$ such that $\alpha+\mu-n>0$ then

\begin{equation}\label{calculate}
(I_\alpha^a (t-a)^\mu)(x)=(\textbf{I}_{n+1}^a
(t-a)^{\mu+\alpha-n-1})(x)=\frac{\Gamma(\alpha+\mu-n)}{\Gamma(\alpha+\mu+1)}
(x-a)^{\alpha+\mu}.
\end{equation}
Analogously, we can find the (conformable) right fractional integral
of such functions. Namely,
\begin{equation}\label{calculater}
(~^{b}I_\alpha (b-t)^\mu)(x)=(~^{b}\textbf{I}_{n+1}
(t-a)^{\mu+\alpha-n-1})(x)=\frac{\Gamma(\alpha+\mu-n)}{\Gamma(\alpha+\mu+1)}
(b-x)^{\alpha+\mu},
\end{equation}
where  $\mu \in \mathbb{R}$ such that $\alpha+\mu-n>0$.
\end{exam}
From the above discussion, we notice that the Riamann fractional
integrals and conformable fractional integrals of polynomial
functions are different up to a constant multiple and coincide for
natural orders.

The following semigroup property relates the composition operator
$I_\mu I_\alpha$ and the operator $I_{\alpha +\mu}$.

\begin{prop}
Let $f:[a,\infty)\rightarrow \mathbb{R}$ be a function and
$0<\alpha,~\mu \leq 1$ be such that $1< \alpha+\mu \leq 2$. Then
\begin{equation}\label{semi}
    (I_\alpha I_\mu f)(t)=\frac{t^\mu}{\mu} (I_\alpha f)(t)+
    \frac{1}{\mu}(I_{\alpha + \mu} f)(t)-\frac{t}{\mu} \int_0^t
    s^{\alpha+\mu-2} f(s) ds.
\end{equation}
\begin{proof}
Interchanging the order of integrals and noting that
\begin{equation}\label{inter}
    (I_{\alpha+\mu}f)(t)=(\textbf{I}_2 s^{\alpha+\mu-2} f(s))(t)=\int_0^t (t-s) s^{\alpha+\mu-2}ds,
\end{equation}
we see that
\begin{eqnarray} \nonumber
  (I_\alpha I_\mu f)(t) &=& \int_0^t (\int_0^{t_1} f(s)s^{\alpha-1}ds)t_1^{\mu-1} dt_1 \\ \nonumber
  &=&\int_0^t f(s)s^{\alpha-1}(\int_s^{t} t_1^{\mu-1}dt_1) ds \\ \nonumber
   &=& \int_0^t f(s)s^{\alpha-1}[\frac{t^\mu}{\mu}-\frac{s^\mu}{\mu}] ds\\ \nonumber
   &=& \frac{t^\mu}{\mu} (I_\alpha f)(t)+ \frac{1}{\mu}[(I_{\alpha+\mu}f)(t)-t\int_0^t s^{\alpha+\mu-2}f(s) ds] \\
\end{eqnarray}

\end{proof}
Notice that if in (\ref{semi}) we let $\alpha,\mu\rightarrow 1$ we
verify that $(I_1 I_1 f)(t)=(I_2f)(t)$.
\end{prop}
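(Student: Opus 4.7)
The plan is to unwind the defining integrals on both sides, swap the order of integration, and then reconcile the resulting single integral of $s^{\alpha+\mu-1}f(s)$ with the higher-order definition of $I_{\alpha+\mu}$ from Definition~\ref{int}.

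First, since $0<\alpha,\mu\leq 1$, each of the operators $I_\alpha$ and $I_\mu$ is the first-order version from the Notation paragraph, so I would write the composition as the iterated integral
\[
(I_\alpha I_\mu f)(t)\;=\;\int_0^t t_1^{\mu-1}\Bigl(\int_0^{t_1} f(s)\,s^{\alpha-1}\,ds\Bigr)dt_1,
\]
(up to harmless renaming, which is permitted because the computation is symmetric in the two exponents). On the region $\{0<s<t_1<t\}$ the integrand is the product of a continuous function and an integrable kernel, so Fubini applies and gives
\[
(I_\alpha I_\mu f)(t)\;=\;\int_0^t f(s)\,s^{\alpha-1}\Bigl(\int_s^t t_1^{\mu-1}\,dt_1\Bigr)ds\;=\;\int_0^t f(s)\,s^{\alpha-1}\frac{t^\mu-s^\mu}{\mu}\,ds.
\]
The $t^\mu$ piece immediately yields $\frac{t^\mu}{\mu}(I_\alpha f)(t)$, and the $s^\mu$ piece yields $-\frac{1}{\mu}\int_0^t s^{\alpha+\mu-1}f(s)\,ds$, so the only remaining work is to rewrite this last integral in terms of $I_{\alpha+\mu}f$.

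The key step is that since $1<\alpha+\mu\leq 2$, Definition~\ref{int} applies with $n=1$ and $\beta=\alpha+\mu-1$, giving
\[
(I_{\alpha+\mu}f)(t)\;=\;\int_0^t (t-s)\,s^{\alpha+\mu-2}f(s)\,ds\;=\;t\int_0^t s^{\alpha+\mu-2}f(s)\,ds-\int_0^t s^{\alpha+\mu-1}f(s)\,ds.
\]
Solving for $\int_0^t s^{\alpha+\mu-1}f(s)\,ds$ and substituting into the expression obtained from Fubini immediately produces the three claimed terms in (\ref{semi}).

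The main obstacle is conceptual rather than computational: the kernel $s^{\alpha+\mu-1}$ that emerges naturally from Fubini does \emph{not} coincide with the Cauchy-type kernel $(t-s)s^{\alpha+\mu-2}$ that Definition~\ref{int} assigns to $I_{\alpha+\mu}$ in the regime $\alpha+\mu>1$, and it is precisely this discrepancy that forces the correction term $\frac{t}{\mu}\int_0^t s^{\alpha+\mu-2}f(s)\,ds$ to appear. As a consistency check I would verify that in the limit $\alpha,\mu\to 1$ the correction cancels one unit of $t$ against the $\frac{t^\mu}{\mu}(I_\alpha f)$ term and the remaining piece collapses to $(I_2 f)(t)=\int_0^t(t-s)f(s)\,ds$, recovering the classical identity $I_1I_1 f=I_2 f$ noted immediately after the proposition.
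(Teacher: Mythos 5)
Your proposal is correct and follows essentially the same route as the paper: write the composition as an iterated integral with kernel $s^{\alpha-1}$ inside and $t_1^{\mu-1}$ outside, interchange the order of integration to get $\int_0^t f(s)s^{\alpha-1}\frac{t^\mu-s^\mu}{\mu}\,ds$, and then use the higher-order definition $(I_{\alpha+\mu}f)(t)=\int_0^t(t-s)s^{\alpha+\mu-2}f(s)\,ds$ (the $n=1$, $\beta=\alpha+\mu-1$ case) to trade $\int_0^t s^{\alpha+\mu-1}f(s)\,ds$ for the two terms appearing in (\ref{semi}). Your explicit justification of Fubini and the closing consistency check at $\alpha,\mu\to 1$ are welcome additions, but the argument itself is the paper's.
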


Recalling  the action of the $Q-$operator on fractional integrals
($Qf(t)=f(a+b-t),~~f:[a,b]\rightarrow\mathbb{R}$) on Riemann left
and right  fractional integrals:
\begin{equation}\label{Q}
    Q\textbf{I}_\alpha ^a f(t)= ~^{b}\textbf{I}_\alpha Qf(t),
\end{equation}
we see that for $\alpha \in (n,n+1]$

\begin{equation}\label{QQ}
    Q I_\alpha ^a f(t)= Q \textbf{I}_{n+1}^a((t-a)^{\alpha-n-1}f(t))= ~^{b}\textbf{I}_{n+1} ((b-t)^{\alpha-n-1} f(a+b-t))  =~^{b}I_\alpha Qf(t),
\end{equation}

 Now we give the generalized version of Lemma \ref{n}.
\begin{lem} \label{nn}
Assume that $f:[a,\infty)\rightarrow \mathbb{R}$ such that
$f^{(n)}(t)$ is continuous and $\alpha \in (n,n+1]$. Then, for all
$t>a$ we have
$$\textbf{T}_\alpha^a I_\alpha ^a f(t)=f(t).$$
\end{lem}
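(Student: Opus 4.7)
The plan is to reduce the higher-order statement to the already-proved order-one case (Lemma \ref{n}) by peeling off $n$ ordinary derivatives from the iterated integral $\textbf{I}_{n+1}^a$.

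First I would unwind the definitions. By Definition \ref{int} with $\beta=\alpha-n$, we have
\[
(I_\alpha^a f)(t)=\bigl(\textbf{I}_{n+1}^a((\cdot-a)^{\beta-1}f)\bigr)(t)=\frac{1}{n!}\int_a^t(t-x)^n(x-a)^{\beta-1}f(x)\,dx,
\]
and by the higher-order Definition for $\textbf{T}_\alpha^a$,
\[
(\textbf{T}_\alpha^a\,I_\alpha^a f)(t)=\bigl(T_\beta^a\bigl[(I_\alpha^a f)^{(n)}\bigr]\bigr)(t).
\]
So the task reduces to identifying $(I_\alpha^a f)^{(n)}$.

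The key step is to show
\[
\frac{d^n}{dt^n}\bigl(\textbf{I}_{n+1}^a\,g\bigr)(t)=\int_a^t g(x)\,dx,\qquad g(x)=(x-a)^{\beta-1}f(x),
\]
i.e.\ taking $n$ ordinary derivatives of an $(n+1)$-fold iterated integral leaves exactly one integral. I would justify this either by direct induction (each differentiation peels one $\int_a^{\cdot}$ off the outer Cauchy formula, the boundary term vanishing because $(t-x)^n$ and its first $n-1$ derivatives vanish at $x=t$) or, equivalently, by differentiating under the integral sign in the Cauchy form $(n!)^{-1}\int_a^t(t-x)^n g(x)\,dx$. The hypothesis that $f^{(n)}$ is continuous is only needed to ensure $g$ is integrable and the resulting single integral is a $C^1$-function of $t$ away from $a$, so that $T_\beta^a$ applies; the peeling itself only uses continuity of $g$.

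Combining the two reductions gives
\[
(I_\alpha^a f)^{(n)}(t)=\int_a^t(x-a)^{\beta-1}f(x)\,dx=(I_\beta^a f)(t),
\]
and therefore
\[
(\textbf{T}_\alpha^a\,I_\alpha^a f)(t)=\bigl(T_\beta^a\,I_\beta^a f\bigr)(t)=f(t)
\]
by Lemma \ref{n} applied with the order-one parameter $\beta\in(0,1]$. The only genuine obstacle is the bookkeeping in the first key step: verifying carefully that the boundary contributions from differentiating $(t-x)^n$ really do vanish for each of the first $n$ differentiations, so that no extra polynomial-in-$t$ terms leak into the final expression. Once that identity is in hand the proof is essentially a one-line appeal to the already-established Lemma \ref{n}.
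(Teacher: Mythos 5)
Your proposal is correct and follows essentially the same route as the paper: the paper likewise differentiates the Cauchy form $\textbf{I}_{n+1}^a\bigl((\cdot-a)^{\beta-1}f\bigr)$ $n$ times to obtain $I_1^a\bigl((\cdot-a)^{\beta-1}f\bigr)=I_\beta^a f$, and then concludes $\textbf{T}_\alpha^a I_\alpha^a f=T_\beta^a I_\beta^a f=f$ by Lemma \ref{n}. Your extra care with the vanishing boundary terms in the $n$-fold differentiation simply makes explicit a step the paper leaves implicit.
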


\begin{proof}
From the definition we have

\begin{equation}
\textbf{T}_\alpha^a I_\alpha ^a f(t)= T_\beta ^a (\frac{d^n}{dt^n}
I_\alpha ^a f(t))=T_\beta ^a (\frac{d^n}{dt^n} I_{n+1} ^a (
(t-a)^{\beta-1}f(t)  ))=T_\beta ^a ( I_{1} ^a ( (t-a)^{\beta-1}f(t)
))
\end{equation}
That it is $\textbf{T}_\alpha^a I_\alpha ^a f(t)=T_\beta ^a I_\beta
^a f(t) $ and hence the result follows by Lemma \ref{n}.
\end{proof}
Similarly we can generalize Lemma \ref{nr}. Indeed,
\begin{lem} \label{nnr}
Assume that $f:(-\infty,b]\rightarrow \mathbb{R}$ such that
$f^{(n)}(t)$ is continuous and $\alpha \in (n,n+1]$. Then, for all
$t<b$ we have
$$~^{b}\textbf{T}_\alpha ~^{b}I_\alpha  f(t)=f(t).$$
\end{lem}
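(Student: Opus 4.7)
My plan is to mirror the proof of Lemma~\ref{nn}, swapping every left-sided operator for its right-sided counterpart and invoking Lemma~\ref{nr} in place of Lemma~\ref{n} at the end. Unfolding both composite operators via the given definitions, one writes
\[
(~^{b}\textbf{T}_\alpha\,~^{b}I_\alpha f)(t) = (-1)^{n+1}\,(~^{b}T_\beta)\!\left(\frac{d^n}{dt^n}\, ~^{b}\textbf{I}_{n+1}\!\big((b-t)^{\beta-1}f(t)\big)\right),
\]
where $\beta=\alpha-n\in(0,1]$. So the whole computation hinges on identifying $\frac{d^n}{dt^n}\,~^{b}\textbf{I}_{n+1}$ acting on a product of the form $(b-t)^{\beta-1}f(t)$.

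For this reduction I would differentiate the right iterated integral $~^{b}\textbf{I}_{n+1}g(t) = \frac{1}{n!}\int_t^b (s-t)^n g(s)\,ds$ under the integral sign. By Leibniz's rule the boundary contribution vanishes since $(s-t)^n$ is zero at $s=t$ (for $n\ge 1$), while differentiating the kernel in $t$ lowers the power by one and produces a factor $-1$; thus $\frac{d}{dt}\,~^{b}\textbf{I}_{k+1}g = -\,~^{b}\textbf{I}_{k}g$. Iterating $n$ times gives $\frac{d^n}{dt^n}\,~^{b}\textbf{I}_{n+1}g(t) = (-1)^n\,~^{b}\textbf{I}_1 g(t) = (-1)^n \int_t^b g(s)\,ds$. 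Applied with $g(t)=(b-t)^{\beta-1}f(t)$, the inner bracket therefore simplifies to $(-1)^{n}\int_t^b (b-s)^{\beta-1}f(s)\,ds = (-1)^n\,~^{b}I_\beta f(t)$.

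Substituting back and combining the scalar factors, the right-hand side collapses to an expression proportional to $~^{b}T_\beta\,~^{b}I_\beta f(t)$, at which point Lemma~\ref{nr} closes the argument by returning $f(t)$. The only non-mechanical aspect is the sign bookkeeping: the $(-1)^{n+1}$ coming from the definition of $~^{b}\textbf{T}_\alpha$ has to be combined with the $(-1)^n$ generated by differentiating under the right integral, and these powers must be tracked carefully through the composition. A slightly cleaner alternative would bypass this entirely: use the $Q$-operator identity $Q\,I_\alpha^a = ~^{b}I_\alpha\,Q$ recorded in (\ref{QQ}) together with its natural analogue for the fractional derivatives, which transports the identity $\textbf{T}_\alpha^a I_\alpha^a f = f$ of Lemma~\ref{nn} to the right end and delivers the present lemma directly.
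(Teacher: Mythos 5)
Your route is the intended one: the paper offers no argument for this lemma beyond the word ``similarly,'' and mirroring the proof of Lemma \ref{nn} is exactly what is meant. Your differentiation under the integral sign is also correct: $\frac{d}{dt}\,{}^{b}\textbf{I}_{k+1}g=-\,{}^{b}\textbf{I}_{k}g$, hence $\frac{d^n}{dt^n}\,{}^{b}I_\alpha f(t)=(-1)^n\,{}^{b}I_\beta f(t)$. The genuine gap is precisely the sign bookkeeping you defer to the reader. With the paper's stated definition $({}^{b}_{\alpha}\textbf{T} f)(t)=(-1)^{n+1}({}^{b}_{\beta}T f^{(n)})(t)$, the two factors you mention combine to $(-1)^{n+1}(-1)^{n}=-1$ for \emph{every} $n$, so your computation, carried to the end, gives
\begin{equation*}
{}^{b}\textbf{T}_\alpha\,{}^{b}I_\alpha f(t)=(-1)^{n+1}(-1)^{n}\,{}^{b}T_\beta\,{}^{b}I_\beta f(t)=-f(t),
\end{equation*}
not $f(t)$. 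Writing that the expression is ``proportional to ${}^{b}T_\beta\,{}^{b}I_\beta f$'' and then invoking Lemma \ref{nr} hides a proportionality constant equal to $-1$; as submitted, the proof does not establish the stated identity. (Note the contrast with the left case, where no signs appear at all, which is why Lemma \ref{nn} goes through mechanically.)

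The way to close the gap is to recognize that the sign in the higher-order right-derivative definition must be read as $(-1)^{n}$ rather than the printed $(-1)^{n+1}$: only then does the $n=0$ case reduce to Definition \ref{de}, as the paper itself asserts, and only then does the $n=0$ statement of Proposition \ref{opr}, namely ${}^{b}I_\alpha\,{}^{b}T_\alpha f(t)=f(t)-f(b)$, hold. Under that consistent convention your two factors cancel, $(-1)^{n}(-1)^{n}=1$, and Lemma \ref{nr} finishes the argument exactly as you outline. So you should either state and use this corrected sign convention explicitly, or else flag that the literal definition yields $-f(t)$. Your alternative $Q$-operator route does not avoid the issue: the identity $T_\beta^a Q=Q\,{}^{b}T_\beta$ holds for $0<\beta\le 1$, but transporting Lemma \ref{nn} to the right endpoint again hinges on which sign sits in the definition of ${}^{b}_{\alpha}\textbf{T}$, so the same bookkeeping must be done there too.
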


\begin{lem} \label{ll}
Let $f,h:[a,\infty)\rightarrow \mathbb{R}$ be functions such that
$T_\alpha ^a$ exists for $t>a$, $f$ is differentiable on
$(a,\infty)$ and $T_\alpha ^a f(t)=(t-a)^{1-\alpha} h(t)$. Then
$h(t)=f^\prime (t)$ for all $t>a$.
\end{lem}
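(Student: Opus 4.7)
The plan is to exploit the elementary identity noted earlier in the paper just after Definition \ref{de}: whenever $f$ is differentiable at $t>a$, the conformable fractional derivative is simply the rescaling $(T_\alpha^a f)(t)=(t-a)^{1-\alpha}f'(t)$. Since both this identity and the hypothesis $(T_\alpha^a f)(t)=(t-a)^{1-\alpha}h(t)$ are assumed to hold on $(a,\infty)$, I would equate the two expressions and cancel the common factor.

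Concretely, first I would verify (or simply cite) the noted identity by applying the limit definition (\ref{deq}): writing the difference quotient as
\begin{equation*}
\frac{f(t+\epsilon(t-a)^{1-\alpha})-f(t)}{\epsilon}
=(t-a)^{1-\alpha}\cdot\frac{f(t+\epsilon(t-a)^{1-\alpha})-f(t)}{\epsilon(t-a)^{1-\alpha}},
\end{equation*}
and letting $\epsilon\to 0$; differentiability of $f$ at $t$ makes the right-hand quotient tend to $f'(t)$, giving $(T_\alpha^a f)(t)=(t-a)^{1-\alpha}f'(t)$. Combining this with the hypothesis yields $(t-a)^{1-\alpha}f'(t)=(t-a)^{1-\alpha}h(t)$ for every $t>a$.

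Finally, since $t>a$ strictly, the factor $(t-a)^{1-\alpha}$ is a positive real number (in particular nonzero) for any $\alpha\in(0,1]$, so I can divide through and conclude $h(t)=f'(t)$ for all $t>a$, as desired. There is really no obstacle here: the statement is essentially the observation that the assignment $g\mapsto (t-a)^{1-\alpha}g$ is injective on $(a,\infty)$, and the only point worth being careful about is not attempting to evaluate at $t=a$, where the factor degenerates.
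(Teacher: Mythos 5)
Your argument is correct and is essentially the paper's own proof: the paper's one-line justification ("by definition and setting $h=\epsilon (t-a)^{1-\alpha}$ so that $h\rightarrow 0$ as $\epsilon\rightarrow 0$") is precisely your substitution showing $(T_\alpha^a f)(t)=(t-a)^{1-\alpha}f'(t)$, after which one equates with the hypothesis and cancels the nonzero factor $(t-a)^{1-\alpha}$ for $t>a$. No discrepancy to report.
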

The proof follows by definition and setting $h=\epsilon
(t-a)^{1-\alpha}$ so that $h\rightarrow 0$ as $\epsilon\rightarrow
0$. As result of Lemma \ref{ll} we can state

\begin{cor}
$f:[a,b)\rightarrow \mathbb{R}$ be such that $(I_\alpha ^a T_\alpha
^a)f(t)$ exists for $b>t>a$. Then, $f(t)$ is differentiable on
$(a,b)$.
\end{cor}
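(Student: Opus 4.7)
The plan is to parallel the one-line substitution used in the sketch of Lemma \ref{ll} and read it backwards: instead of passing from differentiability to the existence of the conformable derivative, I will pass from existence of the conformable derivative back to ordinary differentiability. The hypothesis that
\[
(I_\alpha^a T_\alpha^a f)(t) \;=\; \int_a^t (x-a)^{\alpha-1}\,(T_\alpha^a f)(x)\,dx
\]
exists for every $t \in (a,b)$ forces $T_\alpha^a f$ itself to be defined on $(a,b)$, so there is something concrete to substitute into.

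The key step is then to fix an arbitrary $t \in (a,b)$, make the substitution $h = \epsilon(t-a)^{1-\alpha}$ (valid because $t>a$, so the factor $(t-a)^{1-\alpha}$ is positive) inside the defining limit of $T_\alpha^a f(t)$, and observe that this rewrites the limit as $(t-a)^{1-\alpha}\lim_{h\to 0}\frac{f(t+h)-f(t)}{h}$. Since the left-hand side exists by assumption and $(t-a)^{1-\alpha}\neq 0$, the ordinary difference quotient must converge as well. This gives the identity $f'(t) = (t-a)^{\alpha-1}(T_\alpha^a f)(t)$, and because $t \in (a,b)$ was arbitrary, $f$ is differentiable on $(a,b)$, which is the corollary's conclusion.

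The only delicate point, and therefore where I expect the main obstacle to sit, is in interpreting the hypothesis ``$(I_\alpha^a T_\alpha^a f)(t)$ exists.'' Under a pointwise reading, requiring $T_\alpha^a f(x)$ to be defined at every $x \in (a,b)$, the substitution argument above yields differentiability everywhere on $(a,b)$, matching the stated conclusion; under a weaker almost-everywhere reading one can only recover almost-everywhere differentiability. I would adopt the pointwise interpretation, consistent with the paper's systematic use of $T_\alpha^a$ as a pointwise limit, in which case the whole argument reduces to the one change of variables above.
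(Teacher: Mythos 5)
Your proposal is correct and is essentially the paper's own argument: the paper deduces this corollary from Lemma \ref{ll}, whose proof is precisely the substitution $h=\epsilon(t-a)^{1-\alpha}$ that you carry out explicitly, yielding $f'(t)=(t-a)^{\alpha-1}(T_\alpha^a f)(t)$ at each $t\in(a,b)$. If anything, applying the substitution directly is slightly cleaner than citing Lemma \ref{ll}, since that lemma as stated already assumes $f$ differentiable, and your pointwise reading of the hypothesis matches the paper's intent.
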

\begin{lem} \label{op}
Let $f:(a,b) \rightarrow \mathbb{R}$ be differentiable and $0<
\alpha \leq 1$. Then, for all $t>a$ we have
\begin {equation}
I_\alpha^a
 T_\alpha ^a(f)(t)= f(t)-f(a).
  \end{equation}
\end{lem}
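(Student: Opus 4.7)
The plan is to reduce everything to the ordinary fundamental theorem of calculus by unpacking the two definitions and letting the factors $(t-a)^{1-\alpha}$ and $(t-a)^{\alpha-1}$ cancel.

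First I would recall the explicit formula noted immediately after Definition \ref{de}: since $f$ is assumed differentiable on $(a,b)$, we have
\[
(T_\alpha^a f)(t) = (t-a)^{1-\alpha} f'(t) \quad \text{for all } t \in (a,b).
\]
Next I would substitute this expression into the notation introduced for the conformable left fractional integral, namely
\[
(I_\alpha^a g)(t) = \int_a^t (x-a)^{\alpha-1} g(x)\,dx,
\]
with $g(x) = (T_\alpha^a f)(x) = (x-a)^{1-\alpha} f'(x)$. The weights combine as $(x-a)^{\alpha-1}(x-a)^{1-\alpha} = 1$, so the integrand collapses to $f'(x)$.

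Finally, I would invoke the classical fundamental theorem of calculus to conclude
\[
(I_\alpha^a T_\alpha^a f)(t) = \int_a^t f'(x)\,dx = f(t)-f(a),
\]
which is the claimed identity.

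The only mild subtlety is integrability at the left endpoint: the factor $(x-a)^{\alpha-1}$ is singular when $\alpha < 1$, but this singularity is exactly canceled by $(x-a)^{1-\alpha}$ coming from $T_\alpha^a f$, so no improper-integral argument is really needed. Thus the proof is essentially a one-line algebraic cancellation followed by the ordinary fundamental theorem, and there is no genuine obstacle.
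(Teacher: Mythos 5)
Your proposal is correct and matches the paper's own argument: the paper likewise invokes the identity $(T_\alpha^a f)(t)=(t-a)^{1-\alpha}f'(t)$ for differentiable $f$ (Theorem 2.1 (6) of the cited reference), substitutes it into $\int_a^t (x-a)^{\alpha-1}(T_\alpha^a f)(x)\,dx$, cancels the powers of $(x-a)$, and concludes by the fundamental theorem of calculus. Your remark about the boundary singularity canceling is a small additional clarification, but the route is essentially identical.
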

\begin{proof}
Since $f$ is differentiable then by the help of  Theorem 2.1 (6) in
\cite{New} we have

\begin{equation}
I_\alpha^a
 T_\alpha ^a (f)(t)= \int_a^t
(x-a)^{\alpha-1} T_\alpha (f)(x) dx= \int_a^t (x-a)^{\alpha-1}(x-a)^{1-\alpha}
f^\prime(x) dx=f(t)-f(a).
\end{equation}
\end{proof}
The above Lemma \ref{op} above can be generalized for the higher
order as follows.

\begin{prop}
Let $\alpha \in (n,n+1] $ and $f:[a,\infty) \rightarrow \mathbb{R}$
be $(n+1)$ times differentiable for $t>a$. Then, for all $t>a$ we
have
\begin {equation} \label{c1}
I_\alpha^a
 \textbf{T}_\alpha ^a(f)(t)= f(t)-\sum_{k=0}^n \frac{f^{(k)}(a)(t-a)^k} {k!}.
  \end{equation}
\end{prop}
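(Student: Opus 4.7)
The plan is to unwind both operators until the composition reduces to an ordinary iterated integral of $f^{(n+1)}$, at which point classical Taylor expansion with integral remainder finishes the job.

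First, by Definition \ref{int}, $I_\alpha^a g(t) = \textbf{I}_{n+1}^a\bigl((t-a)^{\beta-1} g(t)\bigr)$ with $\beta = \alpha-n$. Applying this to $g(t) = \textbf{T}_\alpha^a f(t) = T_\beta^a f^{(n)}(t)$, I would then use the differentiability of $f^{(n)}$ to invoke the basic identity $T_\beta^a f^{(n)}(t) = (t-a)^{1-\beta} f^{(n+1)}(t)$ (noted right after Definition \ref{de}). Multiplying by the weight $(t-a)^{\beta-1}$ inside the integral cancels the $(t-a)^{1-\beta}$ factor exactly, yielding
\begin{equation*}
I_\alpha^a \textbf{T}_\alpha^a f(t) \;=\; \textbf{I}_{n+1}^a f^{(n+1)}(t) \;=\; \frac{1}{n!} \int_a^t (t-x)^n f^{(n+1)}(x)\, dx.
\end{equation*}

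The remaining task is to identify this with the right-hand side of (\ref{c1}). This is the standard integral-remainder form of Taylor's theorem: integrating by parts $n$ times (each step moves one power from $(t-x)^k$ onto $f^{(k+1)}$, picking up a boundary term $-\frac{f^{(n-k)}(a)(t-a)^{n-k}}{(n-k)!}$ at $x=a$) reduces the integral to $\int_a^t f'(x)\,dx = f(t)-f(a)$, and collecting all boundary terms produces precisely $f(t) - \sum_{k=0}^n \frac{f^{(k)}(a)(t-a)^k}{k!}$. Alternatively one can cite the Cauchy formula for repeated integration together with Taylor's theorem directly.

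The only substantive step is the cancellation of the $(t-a)^{\beta-1}$ weight against $(t-a)^{1-\beta}$; everything else is either a direct application of the definitions or the classical Taylor remainder formula. There is no real obstacle, since the $(n+1)$-fold differentiability hypothesis is exactly what is needed both to write $T_\beta^a f^{(n)} = (t-a)^{1-\beta} f^{(n+1)}$ and to justify the integration by parts.
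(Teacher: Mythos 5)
Your proposal is correct and follows essentially the same route as the paper: unwind $\textbf{T}_\alpha^a f = T_\beta^a f^{(n)}$, use $T_\beta^a f^{(n)}(t) = (t-a)^{1-\beta} f^{(n+1)}(t)$ to cancel the weight $(t-a)^{\beta-1}$ inside $\textbf{I}_{n+1}^a$, and then recover the Taylor polynomial by repeated integration by parts. The only difference is that you spell out the Taylor-remainder step which the paper compresses into the phrase ``integration by parts gives (\ref{c1}).''
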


\begin{proof}
From definition and  Theorem 2.1 (6) in
\cite{New}  we have

\begin{equation}\label{b}
 I_\alpha^a\textbf{T}_\alpha ^a(f)(t)= I_{n+1}^a ((t-a)^{\beta-1} T_\beta ^af^{(n)}(t))=   I_{n+1}^a ((t-a)^{\beta-1}(t-a)^{1-\beta} f^{(n+1)}(t))=I_{n+1}^a  f^{(n+1)}(t).
\end{equation}
Then integration by parts gives (\ref{c1}).
\end{proof}

Analogously, in the right case we have

\begin{prop} \label{opr}
Let $\alpha \in (n,n+1] $ and $f:(-\infty,b] \rightarrow \mathbb{R}$
be $(n+1)$ times differentiable for $t<b$. Then, for all $t<b$ we
have
\begin {equation} \label{c1}
~^{b}I_\alpha
 ~^{b}\textbf{T}_\alpha (f)(t)= f(t)-\sum_{k=0}^n \frac{(-1)^k f^{(k)}(b)(b-t)^k} {k!}.
  \end{equation}
  In particular, if $n=0$ or $0< \alpha \leq 1$, then $~^{b}I_\alpha ~^{b}T_\alpha
  (f)(t)=f(t)-f(b)$.
\end{prop}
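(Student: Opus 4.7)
The plan is to mirror the proof of the preceding (left-sided) proposition, adapting each step to the right endpoint. First, I would unpack the definitions. Using $(~^{b}T_\beta g)(t) = -(b-t)^{1-\beta}g'(t)$, the higher-order right conformable derivative of $f$ can be written as a signed scalar multiple of $(b-t)^{1-\beta}f^{(n+1)}(t)$; more precisely, the factor $(-1)^{n+1}$ from the higher-order definition combines with the intrinsic minus of $~^{b}T_\beta$ to produce an overall sign. The right fractional integral of order $\alpha \in (n,n+1]$ is, by the obvious right analogue of Definition \ref{int}, $(~^{b}I_\alpha g)(t) = ~^{b}\textbf{I}_{n+1}\bigl((b-t)^{\beta-1}g\bigr)$, where $~^{b}\textbf{I}_{n+1}$ is the iterated Riemann integral $\frac{1}{n!}\int_t^b (s-t)^n (\cdot)\,ds$. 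Upon composing, the weights $(b-t)^{\beta-1}$ and $(b-t)^{1-\beta}$ cancel and the left-hand side reduces, up to a global sign, to $\frac{1}{n!}\int_t^b (s-t)^n f^{(n+1)}(s)\,ds$.

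The remaining task is purely classical: recognise this integral as Taylor's remainder for $f$ expanded at the point $b$. One route is to invoke Taylor's formula with integral remainder,
$$f(t)=\sum_{k=0}^n\frac{f^{(k)}(b)}{k!}(t-b)^k+\frac{1}{n!}\int_b^t(t-s)^n f^{(n+1)}(s)\,ds,$$
and rewrite $(t-b)^k=(-1)^k(b-t)^k$ together with $\int_b^t(t-s)^n(\cdot)\,ds = -(-1)^n\int_t^b(s-t)^n(\cdot)\,ds$ to match the normalisation obtained above. The other route, which is the one invoked at the corresponding point of the left-sided proof, is to perform $n+1$ iterated integrations by parts in that integral, taking at each stage $u=(s-t)^k/k!$ and $dv=f^{(k+1)}(s)\,ds$; the $s=b$ boundary evaluations supply the terms $(b-t)^k f^{(k)}(b)/k!$ for $k=0,\ldots,n$, while the $s=t$ evaluations vanish for $k\geq 1$.

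The main obstacle is sign bookkeeping. Three distinct sources of minus signs must telescope cleanly into the alternating factor $(-1)^k$ on the right-hand side: the $(-1)^{n+1}$ in the definition of the higher-order right derivative, the intrinsic minus of $~^{b}T_\beta$, and the alternations arising when converting between powers of $(t-b)$ and $(b-t)$, and of $(t-s)$ and $(s-t)$. A useful sanity check is the $n=0$ specialisation, which must collapse to $f(t)-f(b)$, in agreement with the ``in particular'' clause and with the right analogue of Lemma \ref{op}. A near bookkeeping-free alternative is to conjugate the already-established left-sided proposition by the $Q$-operator: the identity (\ref{QQ}) and its natural analogue for derivatives transport the left identity to its right counterpart, with the alternating signs appearing automatically because $Q$ sends $(t-a)^k$ to $(b-t)^k$ whereas $f^{(k)}(a)$ becomes $(-1)^k f^{(k)}(b)$ after a chain-rule adjustment.
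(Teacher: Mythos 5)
Your overall route is the same as the paper's: the paper gives no separate argument for the right case (it is introduced only with ``Analogously''), and the left-sided proof you are transporting is exactly unpack the definitions, cancel the weights $(b-t)^{\beta-1}(b-t)^{1-\beta}$, and finish with iterated integration by parts (equivalently, Taylor's formula with integral remainder at $b$); the $Q$-operator conjugation you mention is also a reasonable shortcut. So the plan is the right one.

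However, the single step you defer --- the sign bookkeeping --- is precisely where the content lies, and with the definitions as literally written it does \emph{not} telescope cleanly. Since $({}^{b}T_\beta g)(t)=-(b-t)^{1-\beta}g'(t)$ and the higher-order right derivative carries the stated factor $(-1)^{n+1}$, one gets $({}^{b}\textbf{T}_\alpha f)(t)=(-1)^{n}(b-t)^{1-\beta}f^{(n+1)}(t)$, hence after the weights cancel
\[
({}^{b}I_\alpha\,{}^{b}\textbf{T}_\alpha f)(t)=\frac{(-1)^{n}}{n!}\int_t^b (s-t)^n f^{(n+1)}(s)\,ds,
\]
whereas Taylor expansion at $b$ gives
\[
f(t)-\sum_{k=0}^n\frac{(-1)^k f^{(k)}(b)(b-t)^k}{k!}=\frac{(-1)^{n+1}}{n!}\int_t^b (s-t)^n f^{(n+1)}(s)\,ds,
\]
so the two sides differ by a factor $-1$ (try $f(t)=t^2$, $b=1$, $t=0$, $n=1$). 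Your $n=0$ sanity check does not catch this because the ``in particular'' clause is phrased with the unbolded ${}^{b}T_\alpha$; pushed through the higher-order definition, $n=0$ gives ${}^{b}\textbf{T}_\alpha f=-\,{}^{b}T_\alpha f$ and the composition is $f(b)-f(t)$. To close the argument you must read the factor in the definition of ${}^{b}\textbf{T}_\alpha$ as $(-1)^{n}$ rather than $(-1)^{n+1}$ --- which is what the paper's own remark that the $n=0$ case coincides with Definition \ref{de} forces, and which also makes $Q\,T_\alpha^a\,Q={}^{b}\textbf{T}_\alpha$ so that your $Q$-conjugation works without an extra sign --- or else record the discrepancy explicitly. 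With that convention fixed, your computation completes exactly as in the left case.
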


\begin{thm} \label{Ch} (\textbf{Chain Rule})
Assume $f,g:(a,\infty)\rightarrow \mathbb{R}$ be (left)
$\alpha-$differentiable functions, where $0< \alpha \leq 1$. Let
$h(t)=f(g(t))$. Then $h(t)$ is (left)$\alpha-$differentiable and for
all $ t ~\texttt{with}~t \neq a ~\texttt{and}~g(t)\neq 0$ we have

\begin{equation}
(T_\alpha ^a h)(t)= (T_\alpha ^a f)(g(t)).( T_\alpha ^a g)(t).
g(t)^{\alpha-1}.
\end{equation}
If $t=a$ we have
\begin{equation}
(T_\alpha ^a h)(a)= \lim_{t\rightarrow a ^+}(T_\alpha ^a f)(g(t)).(
T_\alpha ^a g)(t). g(t)^{\alpha-1}.
\end{equation}
\end{thm}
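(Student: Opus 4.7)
The plan is to attack the identity directly from the limit definition of $T_\alpha^a$, mimicking the classical chain-rule argument but carefully tracking the $(t-a)^{1-\alpha}$ weight built into the definition. Starting from
\[
(T_\alpha^a h)(t) = \lim_{\epsilon\to 0}\frac{f(g(t+\epsilon(t-a)^{1-\alpha}))-f(g(t))}{\epsilon},
\]
I would introduce the auxiliary increment $\tau(\epsilon) := g(t+\epsilon(t-a)^{1-\alpha}) - g(t)$, which tends to $0$ as $\epsilon\to 0$ because $g$, being $\alpha$-differentiable at $t\neq a$, is continuous there. Multiplying and dividing the difference quotient by $\tau(\epsilon)$ splits it into
\[
\frac{f(g(t)+\tau)-f(g(t))}{\tau}\cdot\frac{\tau}{\epsilon},
\]
and the second factor is, by definition, converging to $(T_\alpha^a g)(t)$.

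For the first factor, I would reparametrise by $\epsilon' := \tau(\epsilon)/g(t)^{1-\alpha}$, which is permissible precisely because $g(t)\neq 0$. Substituting $\tau = \epsilon' g(t)^{1-\alpha}$ rewrites the factor as
\[
g(t)^{\alpha-1}\cdot\frac{f(g(t)+\epsilon' g(t)^{1-\alpha})-f(g(t))}{\epsilon'},
\]
whose limit as $\epsilon\to 0$ (hence $\epsilon'\to 0$) produces $g(t)^{\alpha-1}(T_\alpha^a f)(g(t))$ in the sense in which the theorem statement uses that symbol, namely that the inner difference quotient is of conformable type with the weight at $g(t)$ supplying the factor $g(t)^{1-\alpha}$. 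Taking the product of the two limits yields the claimed formula. The endpoint case $t=a$ then follows by passing to the limit $t\to a^{+}$ and invoking the one-sided convention of Definition~\ref{de}.

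The main technical obstacle is the degenerate case in which $\tau(\epsilon)=0$ for a sequence $\epsilon_n\to 0$, which invalidates the multiply-and-divide step along that sequence. I would handle this separately: along such a sequence both the left-hand side and the factor $(T_\alpha^a g)(t)$ vanish, so a short continuity argument for $f$ near $g(t)$ patches the identity back together across the combined subsequence. A secondary, interpretational subtlety is that $(T_\alpha^a f)(g(t))$ is an evaluation of $f$'s conformable derivative at a point $g(t)$ that need not lie in $[a,\infty)$; the reparametrisation above clarifies the intended reading and explains the appearance of $g(t)^{\alpha-1}$ rather than a factor of the form $(g(t)-a)^{\alpha-1}$.
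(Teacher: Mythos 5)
Your argument is essentially the paper's own proof: both rewrite the conformable quotient via the substitution $u=t+\epsilon(t-a)^{1-\alpha}$, split it into the ordinary difference quotient of $f$ at $g(t)$ times the conformable quotient of $g$, and insert the factor $g(t)^{1-\alpha}\cdot g(t)^{\alpha-1}$ to read the first factor as $(T_\alpha^a f)(g(t))$. Your treatment is in fact slightly more careful than the paper's, since you handle the case $g(u)=g(t)$ along a sequence and flag the convention behind the weight $g(t)^{1-\alpha}$, both of which the paper passes over silently.
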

\begin{proof}
By setting $u=t+ \epsilon(t-a)^{1-\alpha}$ in the definition and
using continuity of $g$ we see that

\begin{eqnarray}
  T_\alpha^a h(t)=  &=& \lim_{u\rightarrow t}
  \frac{f(g(u))-f(g(t))}{(u-t)}t^{1-\alpha}\\ \nonumber
   &=&\lim_{u\rightarrow t} \frac{f(g(u))-f(g(t))}{(g(u)-g(t))}. \lim_{u\rightarrow t}
   \frac{g(u)-g(t)}{u-t}t^{1-\alpha}\\ \nonumber
   &=&\lim_{g(u)\rightarrow g(t)} \frac{f(g(u))-f(g(t))}{(g(u)-g(t))}.g(t)^{1-\alpha} .T_\alpha ^a
   g(t).g(t)^{\alpha-1}\\ \nonumber
   &=&(T_\alpha ^a f)(g(t)).( T_\alpha ^a g)(t).
g(t)^{\alpha-1}.
\end{eqnarray}
\end{proof}

\begin{prop}
Let $f:[a,\infty)\rightarrow \infty$ be twice differentiable on
$(a,\infty)$ and $0<\alpha,~\beta \leq 1$ such that
$1<\alpha+\beta\leq 2$. Then
\begin{equation}\label{ff}
    (T_\alpha ^a T_\beta ^a f)(t)= T_{\alpha+\beta}^a f(t)+
    (1-\beta) (t-a)^{-\beta}T_\alpha^a f(t).
\end{equation}
\begin{proof}
By the fractional product rule and that $f$ is twice differentiable
differentiable we have
\begin{eqnarray}
  (T_\alpha ^a T_\beta ^a f)(t) &=&
  t^{1-\alpha}\frac{d}{dt}[t^{1-\beta}(t-a)^{-\beta}f^\prime(t)]\\ \nonumber
   &=& t^{1-\alpha}[t^{1-\beta} f^{\prime\prime}(t)+(1-\beta)(t-a)^{-\beta}f^\prime(t)]
   \\ \nonumber
  &=& T_{\alpha+\beta}^a f(t)+
    (1-\beta)(t-a)^{-\beta} T_\alpha^a f(t).
\end{eqnarray}
\end{proof}
\end{prop}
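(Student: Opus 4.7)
The plan is to reduce everything to ordinary derivatives using the characterization that for a differentiable function $g$, $(T_\gamma^a g)(t) = (t-a)^{1-\gamma} g'(t)$, and then match the two sides term by term. Since $f$ is twice differentiable, both $f$ and $T_\beta^a f$ (which will be shown to equal $(t-a)^{1-\beta} f'(t)$) are differentiable, so this characterization applies at every step.

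First I would write $(T_\beta^a f)(t) = (t-a)^{1-\beta} f'(t)$ and then, viewing this as a new differentiable function $g(t) = (t-a)^{1-\beta} f'(t)$, apply $T_\alpha^a$ by the same rule:
\begin{equation*}
(T_\alpha^a T_\beta^a f)(t) = (t-a)^{1-\alpha}\,\frac{d}{dt}\bigl[(t-a)^{1-\beta} f'(t)\bigr].
\end{equation*}
The product rule then produces two terms: $(1-\beta)(t-a)^{-\beta} f'(t)$ and $(t-a)^{1-\beta} f''(t)$. Multiplying through by $(t-a)^{1-\alpha}$ yields
\begin{equation*}
(T_\alpha^a T_\beta^a f)(t) = (1-\beta)(t-a)^{1-\alpha-\beta} f'(t) + (t-a)^{2-\alpha-\beta} f''(t).
\end{equation*}

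Next I would identify each of these two terms on the right-hand side of the claimed formula. For the second term, since $\alpha+\beta \in (1,2]$, the higher-order definition applies with $n=1$ and $\gamma := \alpha+\beta-1 \in (0,1]$, so $T_{\alpha+\beta}^a f(t) = T_\gamma^a f'(t) = (t-a)^{1-\gamma} f''(t) = (t-a)^{2-\alpha-\beta} f''(t)$. For the first term, $(1-\beta)(t-a)^{-\beta} T_\alpha^a f(t) = (1-\beta)(t-a)^{-\beta}(t-a)^{1-\alpha} f'(t) = (1-\beta)(t-a)^{1-\alpha-\beta} f'(t)$. Summing these two expressions recovers precisely the computation above, establishing the identity.

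There is essentially no real obstacle here; the only subtle point is recognizing the correct parsing of $T_{\alpha+\beta}^a$ in the regime $\alpha+\beta>1$: one must invoke the higher-order Definition with $n=1$ rather than attempt to apply the basic limit \eqref{deq} directly (which is only valid for orders in $(0,1]$). Once this is in place, the proof is a one-line product-rule calculation matched against two straightforward evaluations of the conformable derivative on differentiable functions.
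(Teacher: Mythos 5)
Your proof is correct and follows essentially the same route as the paper: write both conformable derivatives as $(t-a)^{1-\gamma}$ times an ordinary derivative, apply the product rule, and identify the two resulting terms with $T_{\alpha+\beta}^a f$ (via the higher-order definition with $n=1$) and $(1-\beta)(t-a)^{-\beta}T_\alpha^a f$. If anything, your version is carried out more carefully, since you use $(t-a)$ consistently where the paper's displayed computation mixes $t$ and $t-a$, and you state explicitly why $T_{\alpha+\beta}^a f(t)=(t-a)^{2-\alpha-\beta}f''(t)$.
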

Note that in (\ref{ff}) if we let $\alpha,\beta\rightarrow 1$ then
we have $T_\alpha ^a T_\beta f(t)= T_2 f(t)=f^{\prime\prime}(t).$

 Next we prove a fractional
version of \textbf{Gronwall inequality} which will be useful is
studying stability of (conformable) fractional systems.

\begin{thm}
Let $r$ be a continuous, nonnegative function on an interval
$J=[a,b]$ and $\delta$ and k be nonnegative constants such that
\begin{equation} \nonumber
r(t)\leq \delta + \int_a^t k r(s)(s-a)^{\alpha-1}ds~~~~~~~~(t \in
J).
\end{equation}
Then for all $t \in J$
\begin{equation} \nonumber
r(t)\leq \delta e^{k \frac{(t-a)^\alpha}{\alpha}}.
\end{equation}

\end{thm}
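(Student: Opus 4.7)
The plan is to reduce the stated conformable Gronwall inequality to the classical (integer-order) Gronwall inequality via the change of variable $\tau = (t-a)^\alpha/\alpha$. The key observation is that $d\tau = (t-a)^{\alpha-1}\,dt$ is precisely the conformable measure $d\alpha(t,a)$ appearing on the right-hand side, so the conformable integral in the hypothesis becomes an ordinary integral in $\tau$.

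Concretely, I would define $\rho(\tau) := r\!\left(a + (\alpha\tau)^{1/\alpha}\right)$ for $\tau \in [0,(b-a)^\alpha/\alpha]$. Performing the substitution in the hypothesis rewrites it as
\begin{equation*}
\rho(\tau) \leq \delta + \int_0^{\tau} k\rho(\sigma)\, d\sigma .
\end{equation*}
Continuity of $r$ on $[a,b]$ and of $\tau \mapsto a+(\alpha\tau)^{1/\alpha}$ on $[0,(b-a)^\alpha/\alpha]$ make $\rho$ continuous, so the classical Gronwall inequality delivers $\rho(\tau) \leq \delta e^{k\tau}$. Reverting to the variable $t$ yields exactly $r(t) \leq \delta e^{k(t-a)^\alpha/\alpha}$.

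An alternative route that stays inside the conformable framework would set $v(t) := k I_\alpha^a r(t)$, so that $v(a)=0$ and, by Lemma~\ref{n}, $T_\alpha^a v(t) = kr(t) \leq k(\delta + v(t))$. Multiplying the resulting inequality $T_\alpha^a v - kv \leq k\delta$ by the integrating factor $e^{-k(t-a)^\alpha/\alpha}$, whose conformable derivative equals $-k$ times itself, and invoking the conformable product rule yields
\begin{equation*}
T_\alpha^a\!\left[v(t) e^{-k(t-a)^\alpha/\alpha}\right] \leq k\delta\, e^{-k(t-a)^\alpha/\alpha}.
\end{equation*}
Applying $I_\alpha^a$ to both sides, using Lemma~\ref{op} to undo the composition on the left, and evaluating the elementary integral on the right gives $v(t) \leq \delta\bigl(e^{k(t-a)^\alpha/\alpha}-1\bigr)$, and hence the same conclusion.

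I do not anticipate any genuine obstacle. The only mild subtlety is that $(s-a)^{\alpha-1}$ is integrably singular at $s=a$ when $\alpha<1$; however, continuity of $r$ on $[a,b]$ keeps the integral finite, and both the substitution and the classical Gronwall step go through without modification. I would favor the substitution route for the cleanest exposition, and mention the integrating-factor argument as a self-contained alternative that showcases the calculus developed earlier in the paper.
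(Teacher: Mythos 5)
Your proposal is correct, and your preferred route is genuinely different from the paper's. The paper works entirely inside the conformable framework: it sets $R(t)=\delta+I_\alpha^a(kr)(t)$, notes $T_\alpha^a R - kR \le 0$, multiplies by the integrating factor $K(t)=e^{-k(t-a)^\alpha/\alpha}$ (whose derivative $T_\alpha^a K=-kK$ is computed via the conformable chain rule), applies the product rule to get $T_\alpha^a(KR)\le 0$, and then uses Lemma~\ref{op} to integrate this back up; that is essentially your second, ``alternative'' argument with $R=\delta+v$. Your primary route --- the substitution $\tau=(t-a)^\alpha/\alpha$, which turns the conformable integral into an ordinary one and reduces the statement to the classical Gronwall inequality for the continuous function $\rho(\tau)=r(a+(\alpha\tau)^{1/\alpha})$ --- is shorter and more elementary, at the cost of importing the classical result rather than exercising the conformable toolkit; the paper's choice has the expository virtue of showcasing the chain rule, product rule, and the inversion Lemma~\ref{op} developed earlier, which is precisely the role the Gronwall theorem plays in the article. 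Both arguments are sound; in the integrating-factor version you should (as the paper does) note explicitly that $v(t)e^{-k(t-a)^\alpha/\alpha}$ is differentiable on $(a,b)$ before invoking Lemma~\ref{op}, and that applying $I_\alpha^a$ preserves the inequality because its kernel $(s-a)^{\alpha-1}$ is nonnegative.
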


\begin{proof}
Define $R(t)= \delta + \int_a^t k r(s)(s-a)^{\alpha-1}ds= \delta +
I_\alpha^a ( k r(s))(t)$. Then $R(a)=\delta$ and $R(t)\geq r(t)$,
and
\begin{equation} \label{G1}
T_\alpha ^a R(t)-k R(t)=k r(t)-k R(t)\leq k r(t)-kr(t)=0.
\end{equation}
Multiply (\ref{G1}) by $K(t)=e^{-k \frac{(t-a)^\alpha}{\alpha}}$. By
the help of chain rule in Theorem \ref{Ch} we see that $T_\alpha ^a
K(t)= -k K(t)$ and hence by the product rule we conclude that
$T_\alpha^a(K(t)R(t))\leq 0$. Since $K(t)R(t)$ is differentiable on
$(a,b)$ then Lemma \ref{op} implies that

\begin{equation}
I_\alpha ^a T_\alpha^a (K(t)R(t))=K(t)R(t)-K(a)R(a)=K(t)R(t)-\delta
\leq 0.
\end{equation}
Hence

\begin{equation}
r(t)\leq R(t) \leq \frac{\delta}{K(t)}=\delta e^{k
\frac{(t-a)^\alpha}{\alpha}}.
\end{equation}
 Which completes the proof.
\end{proof}
Finally, in this section we discuss the conformable fractional derivative at $a$ in the left case and at $b$ at the right case for some smooth functions. Let $n-1<\alpha <n$ and assume $f:[a,\infty)\rightarrow \mathbb{R}$ be such that $f^{(n)}(t)$  exists and continuous. Then, $(\textbf{T}_\alpha^a f)(t)=(T_{\alpha+1-n}^a f^{(n-1)})(t)=(t-a)^{n-\alpha}f^{(n)}(t)$ and thus $(\textbf{T}_\alpha^a f)(a)=\lim_{t\rightarrow a^+} (t-a)^{n-\alpha}f^{(n)}(t)=0$. Similarly, in the right case we have $(~^{b}\textbf{T}_\alpha f)(b)=\lim_{t\rightarrow b^-} (b-t)^{n-\alpha}f^{(n)}(t)=0$, for $(-\infty,b]\rightarrow \mathbb{R}$ with $f^{(n)}(t)$  exists and continuous.
Now, let $0<\alpha <1$  and $n \in \{1,2,3,...\}$ then the left (right) sequential conformable fractional derivative of order $n$ is defined by
\begin{equation}\label{seq}
 ~^{(n)} T_ \alpha ^a f(t)=\underbrace{T_\alpha^a T_\alpha^a...T_\alpha^a}_{n-times} f(t)
\end{equation}

and
\begin{equation}\label{rseq}
 ~^{b} T^{(n)}_ \alpha  f(t)=\underbrace{~^{b}T_\alpha ~^{b}T_\alpha...~^{b}T_\alpha}_{n-times} f(t),
\end{equation}
respectively.
If $f:[a,\infty)\rightarrow \mathbb{R}$ is second continuously differentiable and $0<\alpha \leq \frac{1}{2}$ then direct calculations shows that
\begin{equation}\nonumber
~^{(2)}T_\alpha ^a (t) = T_\alpha ^a T_\alpha ^a f(t)=\left\{ \begin{array}{ll} (1-\alpha)(t-a)^{1-2\alpha}f^\prime (t)+(t-a)^{2-2\alpha} f^{\prime\prime}(t) & if\; t>a,\\
0 & if\;~ t=a.\end{array}
    \right.
\end{equation}
Similarly, in the right case, for $f:(-\infty,b]\rightarrow \mathbb{R}$ is second continuously differentiable and $0<\alpha \leq \frac{1}{2}$ then direct calculations show that
\begin{equation}\nonumber
~^{b}T^{(2)}_\alpha  (t) =~^{b} T_\alpha ~^{b} T_\alpha  f(t)=\left\{ \begin{array}{ll} (1-\alpha)(b-t)^{1-2\alpha}f^\prime (t)+(b-t)^{2-2\alpha} f^{\prime\prime}(t) & if\; t<b,\\
0 & if\;~ t=b.\end{array}
    \right.
\end{equation}
This shows that the second order sequential conformable fractional derivative may not be continuous even $f$ is second continuously differentiable for $\frac{1}{2}<\alpha <1$.
If we proceed inductively, then we can see that if $f$ is $n-$continuously continuously differentiable and $0< \alpha \leq \frac{1}{n}$ then the $n-$th order sequential conformable fractional derivative is continuous and vanishes at the end points ($a$ in the left case and $b$ in the right case).
\section{Integration by parts}

\begin{thm} \label{by parts}
Let $f,g:[a,b]\rightarrow \mathbb{R}$ be two functions such that
$fg$ is differentiable. Then
\begin{equation}
\int_a^b f(x) T_\alpha ^a(g)(x) d\alpha(x,a)= fg|_a^b- \int_a^b g(x)
T_\alpha ^a(f)(x) d\alpha(x,a)
\end{equation}
\end{thm}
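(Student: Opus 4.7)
The plan is to reduce the identity to an application of the conformable product rule together with Lemma \ref{op}, so that no direct manipulation of limits is required. First I would note that, since the integrands involve $T_\alpha^a f$ and $T_\alpha^a g$, both $f$ and $g$ must be (left) $\alpha$-differentiable on $(a,b)$; combined with the hypothesis that $fg$ is differentiable, this is enough to invoke the product rule for the conformable fractional derivative proved in \cite{New}, namely
\[
T_\alpha^a(fg)(x) = f(x)\, T_\alpha^a g(x) + g(x)\, T_\alpha^a f(x).
\]

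Next I would multiply both sides by $(x-a)^{\alpha-1}$ and integrate from $a$ to $b$, so that the right-hand side becomes exactly
\[
\int_a^b f(x)\, T_\alpha^a g(x)\, d\alpha(x,a) + \int_a^b g(x)\, T_\alpha^a f(x)\, d\alpha(x,a),
\]
while the left-hand side is $(I_\alpha^a T_\alpha^a(fg))(b)$. Because $fg$ is differentiable on $(a,b)$, Lemma \ref{op} applies to the product $fg$ and gives
\[
(I_\alpha^a T_\alpha^a(fg))(b) = (fg)(b) - (fg)(a) = fg\big|_a^b.
\]

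Combining these two computations yields
\[
fg\big|_a^b = \int_a^b f(x)\, T_\alpha^a g(x)\, d\alpha(x,a) + \int_a^b g(x)\, T_\alpha^a f(x)\, d\alpha(x,a),
\]
and rearranging is the claim. There is really no hard step here: everything is a bookkeeping exercise once the product rule and Lemma \ref{op} are in hand. The only point requiring slight care is the differentiability hypothesis — we are told only that $fg$ is differentiable, not $f$ and $g$ separately, so one should verify that this is precisely the hypothesis needed to apply Lemma \ref{op} to $fg$, while the existence of $T_\alpha^a f$ and $T_\alpha^a g$ (needed for the product rule and for the integrands to be well defined) is built implicitly into the statement.
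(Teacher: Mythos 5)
Your proof is correct and matches the paper's own argument: the paper proves this result exactly by applying Lemma \ref{op} to the product $fg$ together with the product rule (Theorem 2.1 (3) in \cite{New}), which is precisely your reduction. Your added remark about the differentiability hypothesis applying to $fg$ while the $\alpha$-differentiability of $f$ and $g$ is implicit in the statement is a fair and careful reading of the same proof.
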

The proof followed by Lemma \ref{op} applied to $fg$ and Theorem 2.1
(3) in \cite{New}.

The following integration by parts formula is by means of left and
right fractional integrals.

\begin{prop} \label{bypi}
Let $f,g:[a,b]\rightarrow \mathbb{R}$ be functions and $0< \alpha
\leq 1$. Then
\begin{equation}
\int_a^b (I_\alpha^a f)(t) g(t) d_\alpha (b,t)=\int_a^b f(t) (~^{b}
I_\alpha g)(t) d_\alpha (t,a).
\end{equation}
\end{prop}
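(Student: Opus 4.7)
The plan is to unfold both sides using the notational definitions of the conformable fractional integrals and the differentials $d_\alpha(x,a)=(x-a)^{\alpha-1}dx$ and $d_\alpha(b,x)=(b-x)^{\alpha-1}dx$, and then reduce the identity to a single application of Fubini's theorem on the triangular region $\{(x,t): a\le x\le t\le b\}$.

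First I would rewrite the left-hand side as the iterated integral
\begin{equation*}
\int_a^b (I_\alpha^a f)(t)\, g(t)\, d_\alpha(b,t)=\int_a^b\!\!\int_a^t (x-a)^{\alpha-1} f(x)\,(b-t)^{\alpha-1} g(t)\,dx\,dt,
\end{equation*}
so the integrand is a product of a function of $x$ and a function of $t$ integrated over the triangle $a\le x\le t\le b$. Assuming enough integrability on $f$ and $g$ (so Fubini applies — for instance $f,g$ continuous on $[a,b]$ suffices, and in any case $\alpha\in(0,1]$ keeps the weights integrable), I would swap the order of integration: for fixed $x\in[a,b]$, the variable $t$ then runs over $[x,b]$.

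After the swap the integral becomes
\begin{equation*}
\int_a^b (x-a)^{\alpha-1} f(x)\left(\int_x^b (b-t)^{\alpha-1} g(t)\,dt\right)dx,
\end{equation*}
and I would recognize the inner integral, by the paper's notation, as exactly $(~^{b}I_\alpha g)(x)$. Rewriting the outer factor $(x-a)^{\alpha-1}dx$ as $d_\alpha(x,a)$ gives the right-hand side after relabeling $x\leftrightarrow t$.

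The argument is essentially bookkeeping; the only real step is Fubini, and the only mild obstacle is justifying the interchange, which is straightforward under the implicit continuity (or integrability) assumption since both weights $(x-a)^{\alpha-1}$ and $(b-t)^{\alpha-1}$ are integrable on $[a,b]$ for $0<\alpha\le 1$, so the double integral of $|f(x)g(t)|(x-a)^{\alpha-1}(b-t)^{\alpha-1}$ is finite.
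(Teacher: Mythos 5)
Your argument is correct and is essentially the paper's own proof: unfold the definitions of $d_\alpha$ and the fractional integrals, interchange the order of integration over the triangle $a\le x\le t\le b$, and recognize the inner integral as $(~^{b}I_\alpha g)(x)$. The only difference is that you explicitly justify the Fubini step via integrability of the weights $(x-a)^{\alpha-1}$ and $(b-t)^{\alpha-1}$, which the paper leaves implicit.
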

\begin{proof}
From definition we get
\begin{equation}
\int_a^b (I_\alpha^a f)(t) g(t) d_\alpha (t,a)=\int_a^b (\int_a^t
(x-a)^{\alpha-1} f(x)dx) g(t) (b-t)^{\alpha-1} dt.
\end{equation}
Interchanging the order of integrals we reach at

$$\int_a^b (I_\alpha^a f)(t) g(t) d_\alpha (b,t)=\int_a^b f(x) (~^{b}
I_\alpha g)(x) d_\alpha (x,a). $$ Which completes the proof.
\end{proof}

Next we employ Proposition \ref{bypi} to prove an integration by
parts formula by means of left and right fractional derivatives.

\begin{thm} \label{by parts2}
Let $f,g:[a,b]\rightarrow \mathbb{R}$ be differentiable functions
and $0< \alpha \leq 1$. Then
\begin{equation}
\int_a^b (T_\alpha^a f)(t) g(t) d_\alpha (t,a)=\int_a^b f(t) (~^{b}
T_\alpha g)(t) d_\alpha (b,t)+f(t)g(t)|_a^b.
\end{equation}
\begin{proof}
By Proposition \ref{opr} and that $g$ is differentiable, we have

\begin{equation}
\int_a^b (T_\alpha^a f)(t) g(t) d_\alpha (t,a)=\int_a^b (T_\alpha^a
f)(t)~ ^{b}I_\alpha ~ ^{b}T_\alpha g(t) d_\alpha (t,a)+g(b) \int_a^b
(T_\alpha^a f)(t)d_\alpha (t,a).
\end{equation}
Applying Proposition \ref{bypi} leads to

\begin{equation}
\int_a^b (T_\alpha^a f)(t) g(t) d_\alpha (t,a)=\int_a^b (I_\alpha ^a
T_\alpha^a f)(t) ~ ^{b}T_\alpha g(t) d_\alpha (b,t)+g(b) (I_\alpha
^a T_\alpha^a f)(a).
\end{equation}
Then the proof is completed by the help of Lemma \ref{op} by
substituting  $(I_\alpha ^a T_\alpha^a f)(t)= f(t)-f(a)$ using that
$f$ is differentiable and by the help of Proposition \ref{opr} and
that $g$ is differentiable by substituting $(~^{b}I_\alpha
~^{b}T_\alpha g)(t)= g(t)-g(b)$.
\end{proof}
\end{thm}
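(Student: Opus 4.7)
The plan is to reduce this theorem to two earlier results: Proposition~\ref{bypi} (integration by parts for fractional integrals) and Proposition~\ref{opr} (which says $(~^{b}I_\alpha ~^{b}T_\alpha g)(t)=g(t)-g(b)$ when $g$ is differentiable), together with Lemma~\ref{op} ($I_\alpha^a T_\alpha^a f(t)=f(t)-f(a)$). The idea is to rewrite the factor $g(t)$ appearing in the left-hand integral as $g(b)+[g(t)-g(b)]$ and then swap the action of the fractional integral from one factor onto the other via Proposition~\ref{bypi}.

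First, by Proposition~\ref{opr} applied to the differentiable $g$, we have $g(t)=g(b)+(~^{b}I_\alpha ~^{b}T_\alpha g)(t)$. Substituting into the left-hand side splits it as
\begin{equation*}
\int_a^b (T_\alpha^a f)(t) g(t)\, d_\alpha(t,a) = g(b)\int_a^b (T_\alpha^a f)(t)\, d_\alpha(t,a) + \int_a^b (T_\alpha^a f)(t)\, (~^{b}I_\alpha ~^{b}T_\alpha g)(t)\, d_\alpha(t,a).
\end{equation*}
The first term is $g(b)\cdot (I_\alpha^a T_\alpha^a f)(b)=g(b)[f(b)-f(a)]$ by Lemma~\ref{op}.

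Next, to the second term I would apply Proposition~\ref{bypi} with the roles played by $T_\alpha^a f$ and $~^{b}T_\alpha g$. This rewrites it as
\begin{equation*}
\int_a^b (I_\alpha^a T_\alpha^a f)(t)\, (~^{b}T_\alpha g)(t)\, d_\alpha(b,t) = \int_a^b [f(t)-f(a)]\,(~^{b}T_\alpha g)(t)\, d_\alpha(b,t),
\end{equation*}
using Lemma~\ref{op} once more. The $f(a)$ piece becomes $-f(a)\cdot(~^{b}I_\alpha ~^{b}T_\alpha g)(a)=-f(a)[g(a)-g(b)]$ by Proposition~\ref{opr}.

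Finally, I would collect the boundary contributions: combining $g(b)[f(b)-f(a)]$ and $-f(a)[g(a)-g(b)]$ produces $f(b)g(b)-f(a)g(a)=fg|_a^b$, leaving $\int_a^b f(t)(~^{b}T_\alpha g)(t)\,d_\alpha(b,t)$ as the remaining integral, which matches the desired identity. The only nontrivial part is bookkeeping: one must be careful that the measure on each term (the $d_\alpha(t,a)$ versus $d_\alpha(b,t)$) is consistent with the hypothesis of Proposition~\ref{bypi}, since these measures differ, and that the two boundary contributions assemble into $fg|_a^b$ with the correct signs. No new analytic estimates are required.
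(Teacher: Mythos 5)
Your proposal is correct and follows essentially the same route as the paper: write $g(t)=g(b)+(~^{b}I_\alpha\,^{b}T_\alpha g)(t)$ via Proposition \ref{opr}, transfer the integral with Proposition \ref{bypi}, and finish with Lemma \ref{op} and Proposition \ref{opr}; your explicit assembly of the boundary terms into $fg|_a^b$ matches the paper's computation (and is in fact spelled out more carefully than in the paper's text).
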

\begin{rem}
Notice that if in Theorem \ref{by parts} or Theorem \ref{by parts2}
 we let $\alpha\rightarrow 1$ then we obtain the integration by
parts formula in usual calculus, where we have to note that
$d_\alpha(t,a)\rightarrow dt~$, $d_\alpha(b,t)\rightarrow dt,~$
$T_\alpha ^a f(t) \rightarrow f^\prime(t)$ and $~^{b}T_\alpha  f(t)
\rightarrow -f^\prime(t)$ as $\alpha \rightarrow 1$.

\end{rem}

In Theorem \ref{by parts} and  Theorem \ref{by parts2} we needed
some differentiability conditions. We next define some function
spaces on which the obtained integration by parts formulas are still
valid.

\begin{defn}
For $0< \alpha\leq 1$ and an interval $[a,b]$ define

$$I_\alpha ([a,b])=\{f:[a,b]\rightarrow \mathbb{R}: f(x)= (I_\alpha ^a \psi)(x)+f(a),~\texttt{for some}~\psi \in L_\alpha (a)\},$$

and

$$~^{\alpha}I ([a,b])=\{g:[a,b]\rightarrow \mathbb{R}: g(x)= (~^{b}I_\alpha \varphi)(x)+g(b),~\texttt{for some}~\psi \in L_\alpha (b)\},$$

where

$$L_\alpha(a)=\{\psi:[a,b]\rightarrow \mathbb{R}\}: (I_\alpha ^a \psi)(x)~\texttt{exists for all}~x \in [a,b]\}, $$

and

$$L_\alpha(b)=\{\varphi:[a,b]\rightarrow \mathbb{R}\}: (~^{b}I_\alpha  \varphi)(x)~\texttt{exists for all}~x \in [a,b]\}. $$

\end{defn}

\begin{lem} \label{m}
Let
 $f,g:[a,b]\rightarrow \mathbb{R}$ be  functions and $0<\alpha \leq 1$. Then

 a) If $f$ is left (g is right) $\alpha-$ differentiable then $f \in I_\alpha ([a,b])$ ($ g \in ~_{\alpha}I ([a,b])$).

 \indent

 b) If $f \in I_\alpha ([a,b])$ with $f(x)=(I_\alpha^a \psi)(x)+f(a)$
 where $\psi$ is continuous  then $\psi(x)=T_\alpha ^a f(x)$ and $(I_\alpha ^a  T_\alpha ^a
 f)(x)=f(x)-f(a)$.

 \indent

 c)If $g \in ~_{\alpha}I ([a,b])$ with $g(x)=(~^{b}I_\alpha \varphi)(x)+g(b)$
 where $\varphi$ is continuous  then $\varphi(x)=~^{b}T_\alpha g(x)$ and  $(~^{b}I_\alpha   ~^{b}T_\alpha
 g)(x)=g(x)-g(b)$.
\end{lem}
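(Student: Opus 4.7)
The plan is to prove the three statements by reducing each one to the fundamental-theorem type identities already in the paper (Lemmas \ref{n}, \ref{nr}, and \ref{op}), together with the basic observation that $\alpha$-differentiability at $t>a$ forces classical differentiability at $t$ with $T_\alpha^a f(t)=(t-a)^{1-\alpha}f'(t)$.

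For part (a), I would start from the assumption that $f$ is left $\alpha$-differentiable on $[a,b]$. The substitution $h=\epsilon(t-a)^{1-\alpha}$ in the defining limit \eqref{deq} shows that $f$ is in fact classically differentiable on $(a,b]$, so Lemma \ref{op} applies and gives $(I_\alpha^a T_\alpha^a f)(x)=f(x)-f(a)$. Setting $\psi(x)=T_\alpha^a f(x)$ then yields $f(x)=(I_\alpha^a\psi)(x)+f(a)$; the fact that $\psi\in L_\alpha(a)$ is contained in the identity itself, since the integral $(I_\alpha^a\psi)(x)$ exists at every $x\in[a,b]$ (it equals $f(x)-f(a)$). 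Hence $f\in I_\alpha([a,b])$. The statement for the right case is symmetric: $\alpha$-differentiability of $g$ from the right at $t<b$ gives classical differentiability with $(~^{b}T_\alpha g)(t)=-(b-t)^{1-\alpha}g'(t)$, and then Proposition \ref{opr} with $n=0$ delivers $g(x)=(~^{b}I_\alpha\,~^{b}T_\alpha g)(x)+g(b)$, showing $g\in\,^\alpha I([a,b])$.

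For part (b), I would take the defining representation $f(x)=(I_\alpha^a\psi)(x)+f(a)$ and apply $T_\alpha^a$ to both sides. The constant $f(a)$ has vanishing conformable fractional derivative (by the remark after Definition \ref{de}), and since $\psi$ is continuous, Lemma \ref{n} gives $T_\alpha^a(I_\alpha^a\psi)(x)=\psi(x)$. Therefore $T_\alpha^a f(x)=\psi(x)$. Substituting this identification back into the representation yields $(I_\alpha^a T_\alpha^a f)(x)=(I_\alpha^a\psi)(x)=f(x)-f(a)$. Part (c) is proved verbatim in the same way, replacing Lemma \ref{n} by Lemma \ref{nr} and noting that the constant $g(b)$ again has zero right conformable fractional derivative.

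I expect no serious obstacle: each step is an invocation of a previously established identity. The only subtlety worth flagging is the passage in (a) from $\alpha$-differentiability to classical differentiability on $(a,b]$, which has to be used in order to apply Lemma \ref{op}; this is already remarked immediately after Definition \ref{de}, so in effect the whole lemma is an unpacking of the fundamental-theorem pair (Lemmas \ref{n} and \ref{op}) into the function-space language of $I_\alpha([a,b])$ and $\,^\alpha I([a,b])$.
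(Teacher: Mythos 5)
Your proof is correct and takes essentially the same route as the paper: part (a) via Lemma \ref{op} and Proposition \ref{opr} with the choices $\psi=T_\alpha^a f$ and $\varphi=\,^{b}T_\alpha g$, and parts (b) and (c) via Lemmas \ref{n} and \ref{nr} together with the fact that the conformable derivative of a constant vanishes. Your explicit note that $\alpha$-differentiability forces classical differentiability (needed to invoke Lemma \ref{op}) only makes explicit a step the paper leaves implicit.
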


\begin{proof}
The proof of a) follows by Lemma \ref{op} and Proposition \ref{opr} by choosing $\psi(t)=T_\alpha^a f$ and choosing $\varphi(t)=~^{b}T_\alpha g$. The proof of b) follows by Lemma \ref{n} and the fact that the left $\alpha-$derivative of constant function is zero. The proof of c) follows by Lemma \ref{nr} and the fact that the right $\alpha-$derivative of  constant function is zero.
\end{proof}

\begin{thm} \label{by parts2g}
Let $f,g:[a,b]\rightarrow \mathbb{R}$ be  functions such that $f \in I_\alpha([a,b])$ with $\psi(t)$ is continuous and $g \in ~_{\alpha}I ([a,b])$ with $\varphi(t)$ is continuous
and $0< \alpha \leq 1$. Then
\begin{equation}
\int_a^b (T_\alpha^a f)(t) g(t) d_\alpha (t,a)=\int_a^b f(t) (~^{b}
T_\alpha g)(t) d_\alpha (b,t)+f(t)g(t)|_a^b.
\end{equation}
\end{thm}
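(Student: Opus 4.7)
The plan is to imitate the proof of Theorem \ref{by parts2} verbatim, but to replace every appeal to the differentiability of $f$ or $g$ (which was used to invoke Lemma \ref{op} and Proposition \ref{opr}) by the corresponding parts of Lemma \ref{m}. Indeed, the two facts that drove the earlier proof were the identities $(I_\alpha^a T_\alpha^a f)(t)=f(t)-f(a)$ and $(~^{b}I_\alpha\,~^{b}T_\alpha g)(t)=g(t)-g(b)$, and Lemma \ref{m}(b)(c) supplies precisely these under the weaker hypotheses $f\in I_\alpha([a,b])$ with continuous $\psi$ and $g\in~_{\alpha}I([a,b])$ with continuous $\varphi$.

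Concretely, I would first use Lemma \ref{m}(c) to rewrite $g(t)=(~^{b}I_\alpha\,~^{b}T_\alpha g)(t)+g(b)$, and substitute this into the left-hand side, splitting it as
\begin{equation*}
\int_a^b (T_\alpha^a f)(t)\,(~^{b}I_\alpha\,~^{b}T_\alpha g)(t)\,d_\alpha(t,a)\;+\;g(b)\int_a^b (T_\alpha^a f)(t)\,d_\alpha(t,a).
\end{equation*}
Next I would apply Proposition \ref{bypi}, read from right to left with $T_\alpha^a f$ playing the role of the proposition's $f$ and $~^{b}T_\alpha g$ playing the role of its $g$, to convert the first integral into
\begin{equation*}
\int_a^b (I_\alpha^a T_\alpha^a f)(t)\,(~^{b}T_\alpha g)(t)\,d_\alpha(b,t),
\end{equation*}
and then apply Lemma \ref{m}(b) to replace $(I_\alpha^a T_\alpha^a f)(t)$ by $f(t)-f(a)$. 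The second integral is, by definition, $g(b)\cdot(I_\alpha^a T_\alpha^a f)(b)=g(b)\bigl(f(b)-f(a)\bigr)$, again by Lemma \ref{m}(b).

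Putting the pieces together leaves
\begin{equation*}
\int_a^b f(t)\,(~^{b}T_\alpha g)(t)\,d_\alpha(b,t)\;-\;f(a)\int_a^b (~^{b}T_\alpha g)(t)\,d_\alpha(b,t)\;+\;g(b)f(b)-g(b)f(a).
\end{equation*}
To finish I would evaluate the remaining integral: by definition $\int_a^b(~^{b}T_\alpha g)(t)\,d_\alpha(b,t)=(~^{b}I_\alpha\,~^{b}T_\alpha g)(a)$, and Lemma \ref{m}(c) identifies this with $g(a)-g(b)$. Substituting and collecting boundary terms produces exactly $f(t)g(t)\bigl|_a^b$, completing the identity.

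The only subtle point — and the likely main obstacle — is bookkeeping: making sure the measures $d_\alpha(t,a)$ and $d_\alpha(b,t)$ end up on the correct integrals when Proposition \ref{bypi} is applied in the reverse direction, and tracking the signs so that the four boundary contributions $-f(a)g(a)+f(a)g(b)+f(b)g(b)-f(a)g(b)$ telescope correctly to $f(b)g(b)-f(a)g(a)$. No differentiability of $f$ or $g$ is used anywhere; the continuity of $\psi$ and $\varphi$ enters only through Lemma \ref{m}(b)(c).
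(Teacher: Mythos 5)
Your proposal is correct and is exactly the route the paper takes: its proof of this theorem is literally to repeat the argument of Theorem \ref{by parts2}, replacing the uses of Lemma \ref{op} and Proposition \ref{opr} (which needed differentiability) by parts b) and c) of Lemma \ref{m}. Your write-up just fills in the bookkeeping the paper leaves implicit, and the boundary terms telescope as you claim.
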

\begin{proof}
The proof is similar to that in Theorem \ref{by parts2} where we make use of b) and c) in Lemma \ref{m}.
\end{proof}
\section{Fractional power series expansions}
Certain functions, being not infinitely differentiable at  some
point, do not have Taylor power series expansion there. In this
section we set the fractional power series expansions so that those
functions will have fractional power series expansions.

\begin{thm}
Assume $f$ is an infinitely $\alpha-$differentiable function, for
some $0< \alpha \leq 1$ at a neighborhood of a point $t_0$. Then $f$
has the fractional power series expansion:

\begin{equation}\label{exp}
f(t)= \sum_{k=0}^\infty \frac {(T_\alpha
^{t_0}f)^{(k)}(t_0)(t-t_0)^{k \alpha}}{\alpha^k k!},~~~~t_0<t<t_0+
R^{1/\alpha},~~~R>0.
\end{equation}
 Here $(T_\alpha ^{t_0}f)^{(k)}(t_0)$ means the
application of the fractional derivative $k$ times.
\end{thm}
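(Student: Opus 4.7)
The plan is to reduce the claim to the classical Taylor expansion by a change of variable that converts the conformable $\alpha$-derivative into an ordinary derivative. Introduce the new variable $s=(t-t_0)^\alpha/\alpha$ and define the auxiliary function
\begin{equation*}
 g(s):=f\bigl(t_0+(\alpha s)^{1/\alpha}\bigr),
\end{equation*}
so that $t=t_0$ corresponds to $s=0$, and $g$ inherits as much smoothness from $f$ as possible on the corresponding $s$-interval.

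First, from $dt/ds=(\alpha s)^{1/\alpha-1}=(t-t_0)^{1-\alpha}$ and the ordinary chain rule, one computes $g'(s)=(t-t_0)^{1-\alpha}f'(t)$, which by the remark following Definition \ref{de} equals $(T_\alpha^{\,t_0}f)(t)$. Then prove by induction on $k$ that
\begin{equation*}
 g^{(k)}(s)=\bigl(T_\alpha^{\,t_0}\bigr)^{k}f(t).
\end{equation*}
Indeed, once $g^{(k)}$ is identified with the $k$-th iterated conformable derivative (viewed as a function of $t$), differentiating once more in $s$ multiplies by $dt/ds=(t-t_0)^{1-\alpha}$ and reproduces the action of $T_\alpha^{\,t_0}$, completing the induction. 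Passing to the limit $t\to t_0^{+}$ and using the endpoint convention of Definition \ref{de} gives $g^{(k)}(0)=(T_\alpha^{\,t_0}f)^{(k)}(t_0)$, so each coefficient of the desired series is well defined.

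Since $f$ is assumed infinitely $\alpha$-differentiable in a neighbourhood of $t_0$, the identification above shows that $g$ is infinitely differentiable on a corresponding right neighbourhood of $0$. Apply the classical Taylor expansion
\begin{equation*}
 g(s)=\sum_{k=0}^{\infty}\frac{g^{(k)}(0)}{k!}\,s^{k},\qquad |s|<R,
\end{equation*}
and substitute back $g(s)=f(t)$ and $s=(t-t_0)^\alpha/\alpha$ to obtain (\ref{exp}) on the interval $t_0<t<t_0+(\alpha R)^{1/\alpha}$ (matching the stated $R^{1/\alpha}$ up to the absorbable constant $\alpha^{1/\alpha}$).

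The main technical obstacle is the inductive identification $g^{(k)}(s)=(T_\alpha^{\,t_0})^{k}f(t)$, together with the behaviour at the left endpoint $s=0$: the substitution $s\mapsto t_0+(\alpha s)^{1/\alpha}$ is not smooth there for $\alpha\neq 1$, so $g^{(k)}(0)$ must be understood via the continuous extension built into Definition \ref{de}. Once this is handled, convergence of the fractional series on a nontrivial interval is inherited immediately from the classical Taylor convergence of $g$, and passing $\alpha\to 1$ recovers the usual Taylor expansion of $f$ at $t_0$.
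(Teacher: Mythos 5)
Your route is genuinely different from the paper's. The paper's own proof simply \emph{posits} that $f$ has a representation $f(t)=c_0+c_1(t-t_0)^{\alpha}+c_2(t-t_0)^{2\alpha}+\cdots$ on $(t_0,t_0+R^{1/\alpha})$ and then extracts the coefficients by applying $T_\alpha^{t_0}$ repeatedly and evaluating at $t_0$, getting $(T_\alpha^{t_0}f)^{(k)}(t_0)=c_k\,\alpha(2\alpha)\cdots(k\alpha)$, hence $c_k=(T_\alpha^{t_0}f)^{(k)}(t_0)/(\alpha^k k!)$; it is really a uniqueness-of-coefficients computation that takes the existence of the expansion for granted. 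You instead substitute $s=(t-t_0)^{\alpha}/\alpha$, set $g(s)=f\bigl(t_0+(\alpha s)^{1/\alpha}\bigr)$, prove $g^{(k)}(s)=\bigl((T_\alpha^{t_0})^{k}f\bigr)(t)$ by induction, and quote the classical Taylor expansion of $g$ at $0$. This buys a cleaner picture: the fractional series \emph{is} the ordinary Taylor series of $g$ (the same substitution the paper uses later to relate $L_\alpha^{t_0}$ to the usual Laplace transform), and your inductive identification of $g^{(k)}$ is exactly the computation that the paper's coefficient extraction encodes implicitly. Your treatment of the endpoint via the limit convention of Definition \ref{de} is the right fix for the non-smoothness of the substitution at $t_0$.

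One caveat you should make explicit: infinite differentiability of $g$ on a right neighbourhood of $0$ does not by itself yield $g(s)=\sum_{k\ge 0}g^{(k)}(0)s^{k}/k!$; that step needs analyticity of $g$ (or a remainder estimate in the spirit of the paper's fractional Taylor inequality, with uniformly bounded iterated derivatives). For example $g(s)=e^{-1/s^{2}}$, i.e.\ $f(t)=e^{-\alpha^{2}/(t-t_0)^{2\alpha}}$, is infinitely $\alpha$-differentiable with $(T_\alpha^{t_0}f)^{(k)}(t_0)=0$ for all $k$, yet is not the zero function, so (\ref{exp}) fails for it. This is a defect of the theorem as stated rather than of your reduction --- the paper's proof hides the same gap by assuming the expansion outright --- but in your write-up you should either add the hypothesis that the series represents $f$ (equivalently that $g$ is analytic at $0$) or flag precisely that the classical Taylor step is where this extra assumption enters.
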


\begin{proof}
Assume $f(t)=c_0+ c_1 (t-t_0)^\alpha+ c_2 (t-t_0)^{2 \alpha }+ c_3
(t-t_0)^{3 \alpha }+...,~~~t_0<t<t_0+ R^{1/\alpha},~~~R >0.$
\end{proof}
Then, $f(t_0)=c_0$. Apply $T_\alpha ^{t_0}$ to $f$ and evaluate at
$t_0$ we see that $(T_\alpha ^{t_0}f)(t_0)=c_1 \alpha$ and hence
$c_1=\frac {(T_\alpha ^{t_0}f)(t_0)} {\alpha}$. Proceeding
inductively and applying $T_\alpha ^{t_0}$ to $f$  $n-$times and
evaluating at $t_0$ we see that $(T_\alpha ^{t_0} f)^{(n)}(t_0)=
c_n. \alpha (2 \alpha)...(n\alpha)=\alpha^n .n!$ and hence

\begin{equation}
c_n= \frac{(T_\alpha ^{t_0}f)^{(n)}(t_0)}   {\alpha^n .n!}.
\end{equation}
Hence (\ref{exp}) is obtained and the proof is completed.

\begin{prop}(\textbf{fractional Taylor inequality})
Assume $f$ is an infinitely $\alpha-$differentiable function, for
some $0< \alpha \leq 1$ at a neighborhood of a point $t_0$ has the Taylor power series representation (\ref{exp}) such that $|(T_\alpha^a f)^{n+1}|\leq M,~~~M>0$ for some $n \in \mathbb{N}$. Then, for all $(t_0,t_0+R)$

\begin{equation}\label{I1}
    |R_n^\alpha(t)|\leq \frac{M}{\alpha^{n+1}(n+1)!}(t-t_0)^{\alpha(n+1)},
\end{equation}
where $R_n^\alpha(t)= \sum_{k=n+1}^\infty \frac {(T_\alpha
^{t_0}f)^{(k)}(t_0)(t-t_0)^{k \alpha}}{\alpha^k k!}=
f(x)-\sum_{k=0}^n \frac {(T_\alpha ^{t_0}f)^{(k)}(t_0)(t-t_0)^{k
\alpha}}{\alpha^k k!}$.
\end{prop}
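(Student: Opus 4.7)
The plan is to write $R_n^\alpha$ as a repeated conformable fractional integral of the $(n+1)$-st iterated $\alpha$-derivative of $f$, and then bound it by $M$ times the corresponding iterated integral of the constant $1$. This is the direct analogue of the classical derivation of the Lagrange remainder: Taylor's theorem with integral remainder, obtained by iterated application of the fundamental theorem of calculus, carries over verbatim once Lemma \ref{op} is read as the fractional fundamental theorem.

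First I would use Lemma \ref{op} as a one-step expansion: for any differentiable $g$ on $(t_0,t_0+R)$ one has $g(t)=g(t_0)+I_\alpha^{t_0}[T_\alpha^{t_0}g](t)$. Setting $g_k:=(T_\alpha^{t_0}f)^{(k)}$ and applying this identity iteratively $n+1$ times to $g_0=f$, with linearity of $I_\alpha^{t_0}$ at each step, produces
\[
f(t)\;=\;\sum_{k=0}^{n}g_k(t_0)\,(I_\alpha^{t_0})^{k}(1)(t)\;+\;(I_\alpha^{t_0})^{n+1}[g_{n+1}](t).
\]
The hypothesis of infinite $\alpha$-differentiability, together with the identity $T_\alpha^{t_0}h(t)=(t-t_0)^{1-\alpha}h'(t)$, guarantees that each $g_k$ is differentiable on $(t_0,t_0+R)$, so Lemma \ref{op} can legitimately be applied at every stage of the iteration.

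Next, I would compute the iterated integral of the constant $1$. Using formula (\ref{calculate}) with $\mu=(k-1)\alpha$ (or by direct integration of $(s-t_0)^{\alpha-1}(s-t_0)^{(k-1)\alpha}$), a quick induction gives
\[
(I_\alpha^{t_0})^{k}(1)(t)=\frac{(t-t_0)^{k\alpha}}{\alpha^{k}\,k!}.
\]
Substituting this back recovers the finite sum in (\ref{exp}) truncated at $n$, and identifies $R_n^\alpha(t)=(I_\alpha^{t_0})^{n+1}[g_{n+1}](t)$.

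Finally I would invoke the pointwise bound $|g_{n+1}|\le M$ and the fact that the kernel $(x-t_0)^{\alpha-1}$ is nonnegative on $(t_0,t_0+R)$, so every application of $I_\alpha^{t_0}$ is monotone. This yields
\[
|R_n^\alpha(t)|\;\le\;M\cdot(I_\alpha^{t_0})^{n+1}(1)(t)\;=\;\frac{M\,(t-t_0)^{(n+1)\alpha}}{\alpha^{n+1}(n+1)!},
\]
which is the claim. The only place some care is needed — and the step I see as the main obstacle — is the justification that the iterative unwinding via Lemma \ref{op} is legitimate at every level; this is exactly why the statement requires $f$ to be infinitely $\alpha$-differentiable rather than merely $(n+1)$-times $\alpha$-differentiable.
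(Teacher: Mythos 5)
Your argument is correct and is exactly the route the paper intends: its proof is the one-line remark that one mimics the classical Taylor-inequality argument with $I_\alpha^{t_0}$ in place of ordinary integration, which is precisely your iterated use of Lemma \ref{op} together with the computation $(I_\alpha^{t_0})^{k}(1)(t)=\frac{(t-t_0)^{k\alpha}}{\alpha^{k}k!}$ and monotonicity of the integral. In effect you have supplied the details the paper leaves implicit, including the observation that $\alpha$-differentiability of each iterate justifies applying Lemma \ref{op} at every stage.
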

The proof is similar to that in usual calculus by applying $I_\alpha^{t_0}$ instead of integration.
\begin{exam}
Consider the fractional exponential function
$f(t)=e^{\frac{(t-t_0)^\alpha}{\alpha}}$, where $0<\alpha < 1$. The
function $f(t)$ is clearly not differentiable at $t_0$ and thus it
does not have Taylor power series representation about $t_0$.
However,  $(T_\alpha ^{t_0}f)^{(n)}(t_0)=1$ for all $n$ and hence

\begin{equation}
f(t)=\sum_{k=0}^\infty \frac {(t-t_0)^{k \alpha}}{\alpha^k k!}.
\end{equation}
The ration test shows that this series is convergent to $f$ on the
interval $[t_0,\infty)$.
\end{exam}

\begin{exam}
The functions $g(t)=sin \frac{(t-t_0)^\alpha}{\alpha}$ and $h(t)=sin \frac{(t-t_0)^\alpha}{\alpha}$ do not have Taylor power
 series expansions about $t=t_0$ for $0<\alpha <1$ since they are not differentiable there. However, by the help of Theorem
 \ref{exp} and that $T_\alpha ^{t_0}sin \frac{(t-t_0)^\alpha}{\alpha}=cos
\frac{(t-t_0)^\alpha}{\alpha}$ and
   $T_\alpha ^{t_0}cos \frac{(t-t_0)^\alpha}{\alpha}=-sin \frac{(t-t_0)^\alpha}{\alpha}$ we can see that

\begin{equation}
sin \frac{(t-t_0)^\alpha}{\alpha}=\sum_{k=0}^\infty (-1)^{k}\frac {(t-t_0)^{(2k+1) \alpha}}{\alpha^{(2k+1)} (2k+1)!},~~~t\in [t_0,\infty).
\end{equation}
and
\begin{equation}
cos \frac{(t-t_0)^\alpha}{\alpha}=\sum_{k=0}^\infty (-1)^{k}\frac {(t-t_0)^{(2k) \alpha}}{\alpha^{(2k)} (2k)!},~~~t\in [t_0,\infty).
\end{equation}

\end{exam}

\begin{exam}
The function $f(x)=\frac{1}{1- \frac{t^\alpha}{\alpha}}$ does not have Taylor power series representation about $t=0$ for $0<\alpha <1$, since it is not differentiable there. However, by the help of Theorem \ref{exp} we can see that

\begin{equation}
\frac{1}{1- \frac{t^\alpha}{\alpha}}=\sum_{k=0}^\infty t^{\alpha k},~~~t\in [0,1).
\end{equation}
or more generally,
\begin{equation}
\frac{1}{1- \frac{(t-t_0)^\alpha}{\alpha}}=\sum_{k=0}^\infty (t-t_0)^{\alpha k},~~~t\in [t_0,t_0+1).
\end{equation}

\end{exam}

\begin{rem}
In case the function $f$ is defined on $(-\infty,a)$ and not
differentiable at $a$ then we search for its (conformal) right
fractional order derivatives $~^{a}T_\alpha$ at $a$ for some
$0<\alpha\leq 1$ and use it for our fractional Taylor series on some
$(a-R,a),~~~~ R>0$. For example the functions
$\frac{(a-t)^\alpha}{\alpha}$, $sin \frac{(a-t)^\alpha}{\alpha}$ and
so on.
\end{rem}
\section{The fractional Laplace transform}
In this section we will define the fractional Laplace transform and
use it to solve some linear fractional equations to produce the
fractional exponential function. Then, we use the method of
successive approximation to verify the solution by making use of the
fractional power series representation discussed in the above
section. Also we shall calculate the Laplace transform for certain
(fractional) type functions.
\begin{defn} \label{flap}
Let $t_0 \in\mathbb{ R}~$, $0<\alpha \leq 1$ and
$f:[t_0,\infty)\rightarrow$ be real valued function. Then the
fractional Laplace transform of order $\alpha$ starting from $a$ of
$f$ is defined by

\begin{equation}
L_\alpha^ {t_0} \{f(t)\}(s)= F_\alpha^{t_0}(s)=\int_{t_0}^\infty
e^{-s \frac{(t-t_0)^\alpha}{\alpha}}f(t)
d\alpha(t,t_0)=\int_{t_0}^\infty e^{-s \frac{(t-t_0)^\alpha }
{\alpha}}f(t)(t-t_0)^{ \alpha-1}dt
\end{equation}
\end{defn}

\begin{thm}
Let $a \in\mathbb{ R}~$, $0<\alpha \leq 1$ and
$f:(a,\infty)\rightarrow$ be differentiable real valued function.
Then
\begin{equation}\label{tran}
L_\alpha ^ a \{ T_\alpha (f)(t)\}(s)= s F_\alpha(s)-f(a).
\end{equation}
\end{thm}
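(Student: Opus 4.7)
The plan is to reduce the problem to ordinary integration by parts by exploiting the simple expression for $T_\alpha^a$ on differentiable functions. Since $f$ is differentiable, the remark following Definition \ref{de} gives $(T_\alpha^a f)(t) = (t-a)^{1-\alpha} f'(t)$. Substituting into the definition of the fractional Laplace transform, the factors $(t-a)^{1-\alpha}$ coming from $T_\alpha^a f$ and $(t-a)^{\alpha-1}$ coming from $d\alpha(t,a)$ cancel exactly, so that
\begin{equation}\nonumber
L_\alpha^a\{T_\alpha(f)(t)\}(s) = \int_a^\infty e^{-s(t-a)^\alpha/\alpha}\, f'(t)\, dt.
\end{equation}

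Next I would apply ordinary integration by parts with $u = e^{-s(t-a)^\alpha/\alpha}$ and $dv = f'(t)\,dt$. A direct differentiation (or, alternatively, the chain rule of Theorem \ref{Ch} written in classical form) gives $du = -s(t-a)^{\alpha-1} e^{-s(t-a)^\alpha/\alpha}\,dt$, while $v = f(t)$. The boundary contribution is $[e^{-s(t-a)^\alpha/\alpha} f(t)]_a^\infty$, which equals $-f(a)$ under the standing assumption that $f$ does not grow faster than $e^{s(t-a)^\alpha/\alpha}$ at infinity (the same implicit growth hypothesis needed for $F_\alpha^a(s)$ to exist in the first place). The remaining integral is
\begin{equation}\nonumber
s\int_a^\infty (t-a)^{\alpha-1} e^{-s(t-a)^\alpha/\alpha} f(t)\,dt = s\, F_\alpha^a(s),
\end{equation}
by Definition \ref{flap}, and combining the two pieces yields the claimed identity.

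The computation is essentially routine; the only delicate point is the boundary term at $+\infty$. I would therefore make explicit a mild admissibility hypothesis (for instance, that $e^{-s(t-a)^\alpha/\alpha} f(t) \to 0$ as $t \to \infty$ for $s$ in some appropriate right half-plane), which in turn guarantees both that $F_\alpha^a(s)$ is defined and that the boundary term vanishes. Once this is granted, no further analytic input is required: the $(t-a)$ weights conspire perfectly and classical integration by parts does all the work.
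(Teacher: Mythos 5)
Your proposal is correct and follows exactly the route the paper intends: it invokes the identity $(T_\alpha^a f)(t)=(t-a)^{1-\alpha}f'(t)$ for differentiable $f$ (Theorem 2.1(6) of the cited paper), cancels the weight $(t-a)^{\alpha-1}$ in the definition of $L_\alpha^a$, and finishes with ordinary integration by parts, which is precisely the paper's one-line proof spelled out in detail. Your added remark making the decay hypothesis $e^{-s(t-a)^\alpha/\alpha}f(t)\to 0$ explicit is a reasonable sharpening of an assumption the paper leaves implicit.
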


\begin{proof}
The proof followed by definition, Theorem 2.1 (6) in \cite{New} and
the usual  integration by parts.
\end{proof}

\begin{exam}
Consider the conformable fractional initial value problem:
\begin{equation}
(T_\alpha ^a y)(t)= \lambda y(t),~~~~~ y(a)=y_0, ~~t>a,
\end{equation}
where the solution is assumed to be differentiable on $(a,\infty)$.

Apply the operator $I_\alpha^a$ to the above equation to obtain
\begin{equation}\label{sa1}
y(t)=y_0+ \lambda (I_\alpha^a y)(t).
\end{equation}
Then

\begin{equation}\label{sa2}
y_{n+1}=y_0 + \lambda (I_\alpha^a y_n)(t),~~n=0,1,2,...
\end{equation}
For $n=0$ we see that

\begin{equation}\label{sa3}
    y_1=y_0+ \lambda y_0 \frac{(t-a)^\alpha}{\alpha}=y_0 (1+\lambda \frac{(t-a)^\alpha}{\alpha})
\end{equation}
For $n=1$ we see that

\begin{equation}\label{sa4}
    y_2=y_0[1+ \lambda  \frac{(t-a)^\alpha}{\alpha}+\lambda ^2
    \frac{(t-a)^{2\alpha}}{\alpha(2\alpha)}].
\end{equation}
If we proceed inductively we conclude that
\begin{equation}\label{sa5}
    y_n=y_0 \sum_{k=0}^n \frac{\lambda ^k (t-a)^{k \alpha}}{\alpha^k k!}.
\end{equation}
Letting $n\rightarrow \infty$ we see that

\begin{equation}\label{sa6}
    y(t)=y_0 \sum_{k=0}^\infty \frac{\lambda ^k (t-a)^{k \alpha}}{\alpha^k k!}.
\end{equation}

Which is clearly the fractional Taylor power series representation
of the (fractional) exponential function $y_0 e^{\lambda
\frac{(t-a)^\alpha}{\alpha}}$.
\end{exam}

The following Lemma relates the fractional Laplace transform to the
usual Laplace transform:
\begin{lem}
Let $f:[t_0,\infty)\rightarrow \mathbb{R}$ be a function such that
$L_\alpha ^{t_0} \{f(t)\}(s)=F_\alpha ^{t_0}(s)$ exists. Then
\begin{equation}\label{ss}
F_\alpha ^{t_0}(s)=\mathfrak{L}\{f(t_0+ (\alpha t)^{1/\alpha})\}(s),
\end{equation}
where $\mathfrak{L}\{g(t)\}(s)=\int_0^\infty e^{-st}g(t)dt.$
\end{lem}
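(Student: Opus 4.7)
The plan is to prove the identity by a single change of variables in the defining integral of $F_\alpha^{t_0}(s)$. Looking at the definition
\[
F_\alpha^{t_0}(s) = \int_{t_0}^\infty e^{-s \frac{(t-t_0)^\alpha}{\alpha}} f(t)\,(t-t_0)^{\alpha-1}\,dt,
\]
the natural substitution is $u = \frac{(t-t_0)^\alpha}{\alpha}$, which is exactly the exponent appearing in the kernel. This is motivated by the fact that conformable fractional objects of order $\alpha$ typically become standard objects after rewriting things in terms of $\tfrac{(t-t_0)^\alpha}{\alpha}$.

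With this substitution, the differential $du = (t-t_0)^{\alpha-1}\,dt$ conveniently absorbs the factor $(t-t_0)^{\alpha-1}$ in the integrand. The inverse relation is $t - t_0 = (\alpha u)^{1/\alpha}$, i.e. $t = t_0 + (\alpha u)^{1/\alpha}$, and the limits transform as $t = t_0 \mapsto u = 0$ and $t \to \infty \mapsto u \to \infty$ (using $0 < \alpha \leq 1$ so the map is a strictly increasing bijection of $[t_0,\infty)$ onto $[0,\infty)$). Substituting yields
\[
F_\alpha^{t_0}(s) = \int_0^\infty e^{-su}\, f\bigl(t_0 + (\alpha u)^{1/\alpha}\bigr)\,du,
\]
which is precisely $\mathfrak{L}\{f(t_0 + (\alpha t)^{1/\alpha})\}(s)$ after renaming the dummy variable.

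There is essentially no obstacle: the only point that needs a brief remark is that the change of variables is legitimate because $u(t) = \tfrac{(t-t_0)^\alpha}{\alpha}$ is $C^1$ and strictly monotone on $[t_0, \infty)$ for $0 < \alpha \leq 1$, and the assumption that $L_\alpha^{t_0}\{f(t)\}(s)$ exists guarantees that the integral on the right-hand side converges as well (the two integrals are literally equal term by term after substitution, so convergence of one is equivalent to convergence of the other). This lemma is useful because it reduces any computation of fractional Laplace transforms to the classical Laplace transform of a composed function, and it explains why many classical Laplace identities transfer directly to the conformable setting.
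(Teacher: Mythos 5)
Your proof is correct and follows exactly the route the paper indicates: the single substitution $u=\frac{(t-t_0)^\alpha}{\alpha}$, whose differential absorbs the factor $(t-t_0)^{\alpha-1}$ and maps $[t_0,\infty)$ onto $[0,\infty)$. You simply supply the details (monotonicity, limits, convergence) that the paper leaves implicit.
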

The proof follows easily by setting
$u=\frac{(t-t_0)^\alpha}{\alpha}$.

\begin{exam}
In this example we calculate the fractional Laplace for certain
functions.
\begin{itemize}
  \item $ L_\alpha^{t_0} \{1 \}(s)=\frac{1}{s},~~~s>0$
  \item $L_\alpha ^{t_0}\{t\}(s)=\mathfrak{L} \{ t_0+(\alpha t)^{1/\alpha}\}(s)=\frac{t_0}{s}+
   \alpha^{1/ \alpha}\frac{\Gamma(1+ \frac{1}{\alpha})}{s^{1+
   \frac{1}{\alpha}}},~~~s>0$.
   \item  $L_\alpha
   ^{0}\{t^p\}(s)=\frac{\alpha^{p/\alpha}}{s^{1+p/\alpha}}\Gamma(1+
   \frac{1}{\alpha}),~~~s>0$.
   \item $L_\alpha ^{0}\{e^{\frac{t^\alpha}{\alpha}}\}(s)= \frac{1}{s-1},~~~s>1$.
   \item $L_\alpha ^{0}\{sin\omega \frac{t^\alpha}{\alpha}\}(s)=\mathfrak{L}\{sin\omega t\}(s)=\frac{1}{\omega^2 +s^2}.$
   \item $L_\alpha ^{0}\{cos\omega \frac{t^\alpha}{\alpha}\}(s)=\mathfrak{L}\{cos\omega t\}(s)=\frac{s}{\omega^2+s^2}.$
   \item $L_\alpha ^{t_0}\{e ^{-k \frac{(t-t_0)^\alpha}{\alpha}}
   f(t)\}(s)=\mathfrak{L}\{e^{-kt}f(t_0+(\alpha
   t)^{\frac{1}{\alpha}})\}.$ For example $L_\alpha ^{0}\{e ^{-k \frac{t^\alpha}{\alpha}}
   sin \frac{t^\alpha}{\alpha}\}(s)=\mathfrak{L}\{e^{-kt}sin
   t\}(s)=\frac{1}{(s+k)^2+1}$ and $L_\alpha ^{t_0}\{e^{\lambda \frac{(t-t_0)^\alpha}{\alpha} }\}=\mathfrak{L}\{e^{\lambda
   t}\}=\frac{1}{s-\lambda}$.
\end{itemize}

\end{exam}
Notice that in the above example there are some functions, with
$0<\alpha <1$, whose usual Laplace is not easy to be calculated.
However, their fractional Laplace can be easily calculated.
\begin{exam}
We use the fractional Laplace transform to verify the solution of
the  conformable fractional initial value problem:
\begin{equation}
(T_\alpha ^a y)(t)= \lambda y(t),~~~~~ y(a)=y_0, ~~t>a,
\end{equation}
where the solution is assumed to be differentiable on $(a,\infty)$.

Apply $L_\alpha ^a$ and use (\ref{tran}) to conclude  that

\begin{equation}\label{last}
 L_\alpha ^a \{y(t)\} (s)=\frac{y_0}{s-\lambda},
\end{equation}

and hence, $y(t)=y_0 e^{\lambda \frac{(t-a)^\alpha}{\alpha}}$.
\end{exam}
Finally, we use the fractional fundamental exponential matrix to
express the solution of (conformable) fractional linear systems.

Consider the system

\begin{equation}\label{sys}
   T_\alpha ^a
   \textbf{y}(t)=A\textbf{y}(t)+\textbf{f}(t),~~~~0<\alpha \leq 1,
\end{equation}

where $\textbf{y}, \textbf{f}:[a,b)\rightarrow \mathbb{R}^n$ are
vector functions and $A$ is an $n\times n$ matrix. The general
solution of the fractional non-homogenous  system (\ref{sys}) is
express by

\begin{equation}\label{express}
    \textbf{y}(t)= e^{A \frac{(t-a)^\alpha}{\alpha}}\textbf{c}+
    \int_a^t e^{A \frac{(t-a)^\alpha}{\alpha}} e^{-A
    \frac{(s-a)^\alpha}{\alpha}} \textbf{f}(s) (s-a)^{1-\alpha} ds,
\end{equation}
where $e^{A \frac{(t-a)^\alpha}{\alpha}}=\sum_{k=0}^\infty \frac{A
^k (t-a)^{k \alpha}}{\alpha^k k!}$ and $\textbf{c}$ is a constant
vector.

\section{Some conclusions and comparisons}

\begin{enumerate}
  \item The conformable fractional derivative behaves well in the
  product rule and  chain rule while complicated formulas appear in
  case of usual fractional calculus.
  \item The conformable fractional derivative of a constant function
  is zero while it is not the case for Riemann fractional
  derivatives.
  \item Mittag-Leffler functions play important rule in fractional
  calculus as a generalization to exponential functions while the
  fractional exponential function $f(t)=e^{\frac{t^\alpha}{\alpha}}$
  appears in case of conformable fractional  calculus.
\item Conformable fractional derivatives, conformable chain rule,
conformable integration by parts,conformable Gronwall's inequality,
conformable exponential function, conformable Laplace transform and
so forth, all tend to the corresponding ones in usual calculus.
\item In case of usual calculus there some functions that do not
have Taylor power series representations about certain points but in
the theory of conformable fractional they do have.
\item \textbf{Open problem:} Is it hard to fractionalize the
conformable fractional calculus, either by iterating the confromable
fractional derivative (Gr\"{u}nwald-Letnikov approach) or by
iterating the conformable fractional integral of order $0<\alpha
\leq 1$ (Riemann approach)? Notice that when $\alpha=0$ we obtain
Hadamard type fractional integrals.
\end{enumerate}

\end{document}